\providecommand{\U}[1]{\protect\rule{.1in}{.1in}}
\newtheorem{theorem}{Theorem}
\theoremstyle{plain}
\newtheorem{corollary}{Corollary}
\newtheorem{lemma}{Lemma}
\newtheorem{proposition}{Proposition}
\newtheorem{remark}{Remark}
\numberwithin{equation}{section}
\numberwithin{equation}{section}
\numberwithin{theorem}{section}
\numberwithin{lemma}{section}
\numberwithin{remark}{section}
\numberwithin{example}{section}
\numberwithin{proposition}{section}
\numberwithin{definition}{section}
\numberwithin{corollary}{section}
\begin{document}
\title[Taylor spectrum]{Taylor spectrum of a Banach module over\\the quantum plane}
\author{Anar Dosi}
\address{College of Mathematical Sciences, Harbin Engineering University, Nangang
District, Harbin, 150001, China}
\email{(dosiev@yahoo.com), (dosiev@metu.edu.tr)}
\date{August 21, 2024}
\subjclass[2000]{ Primary 47A60; Secondary 47A13,.46L10}
\keywords{Banach module over the quantum plane, noncommutative Fr\'{e}chet algebra
presheaf, Slodkowski spectra, Taylor spectrum, transversality, topological homology}

\begin{abstract}
In the paper we investigate the joint spectra of Banach space representations
of the quantum $q$-plane called Banach $q$-modules. Based on the
transversality relation from the topological homology of the trivial modules
versus given a left Banach $q$-module, we introduce the joint (essential)
spectra of a Banach $q$-module. In particular, we have the well defined Taylor
joint spectrum of a Banach $q$-module. The noncommutative projection
$q$-property is proved for the Taylor spectrum, which stands out the
conventional projection property in the commutative case. It is provided the
key examples of the Banach $q$-modules, which do not possesses nether forward
nor backward projection properties.

\end{abstract}
\maketitle

\section{Introduction\label{secInt}}

The present paper is devoted to the noncommutative spectral theory of Banach
space representations of Yu. I. Manin's quantum plane \cite{M}. Noncommutative
complex analytic geometry of a contractive quantum plane was investigated in
\cite{Dosi24}. The quantum plane (or just $q$-plane) is the free associative
algebra
\[
\mathfrak{A}_{q}=\mathbb{C}\left\langle x,y\right\rangle /\left(
xy-q^{-1}yx\right)
\]
generated by $x$ and $y$ modulo the relation $xy=q^{-1}yx$, where
$q\in\mathbb{C}\backslash\left\{  0,1\right\}  $, whose Arens-Michael envelope
is denoted by $\mathcal{O}_{q}\left(  \mathbb{C}_{xy}\right)  $. The
Fr\'{e}chet $\widehat{\otimes}$-algebra $\mathcal{O}_{q}\left(  \mathbb{C}%
_{xy}\right)  $ is treated as the algebra of all noncommutative entire
functions in the generators $x$ and $y$. In the case of $\left\vert
q\right\vert \neq1$, the algebra $\mathcal{O}_{q}\left(  \mathbb{C}%
_{xy}\right)  $ can be extended up to a noncommutative Fr\'{e}chet
$\widehat{\otimes}$-algebra presheaf $\mathcal{O}_{q}$ on the space
$\mathbb{C}_{xy}=\operatorname{Spec}\left(  \mathcal{O}_{q}\left(
\mathbb{C}_{xy}\right)  \right)  $ of all continuous characters of
$\mathcal{O}_{q}\left(  \mathbb{C}_{xy}\right)  $. The obtained ringed
(noncommutative) space $\left(  \mathbb{C}_{xy},\mathcal{O}_{q}\right)  $
represents the geometry of $\mathfrak{A}_{q}$ as a union of two irreducible
components being copies of the complex plane equipped with the $q$-topology
and the disk topology, respectively. Namely, $\mathbb{C}_{xy}=\mathbb{C}%
_{x}\cup\mathbb{C}_{y}$ is the union of the complex planes $\mathbb{C}%
_{x}=\mathbb{C\times}\left\{  0\right\}  \subseteq\mathbb{C}^{2}$ and
$\mathbb{C}_{y}=\left\{  0\right\}  \times\mathbb{C\subseteq C}^{2}$. The
Fr\'{e}chet $\widehat{\otimes}$-algebra $\mathcal{O}_{q}\left(  \mathbb{C}%
_{xy}\right)  $ itself admits the following topological direct sum
decomposition
\[
\mathcal{O}_{q}\left(  \mathbb{C}_{xy}\right)  =\mathcal{O}\left(
\mathbb{C}_{x}\right)  \oplus\operatorname{Rad}\mathcal{O}_{q}\left(
\mathbb{C}_{xy}\right)  \oplus\mathcal{I}_{y},
\]
where $\mathcal{O}\left(  \mathbb{C}_{x}\right)  $ is the commutative algebra
of all entire function in one variable $x$ (which is the closed unital
subalgebra generated by $x$), $\operatorname{Rad}\mathcal{O}_{q}\left(
\mathbb{C}^{2}\right)  $ is the Jacobson radical, and $\mathcal{I}_{y}$ is the
closed non-unital subalgebra generated by $y$. That decomposition of
$\mathcal{O}_{q}\left(  \mathbb{C}_{xy}\right)  $ is extended up to the sheaf
decomposition $\mathcal{O}_{q}=\mathcal{O}\oplus\operatorname{Rad}%
\mathcal{O}_{q}\oplus\mathcal{I}_{y}$ (see \cite{Dosi24}), that describes the
structure (pre)sheaf $\mathcal{O}_{q}$.

Our main task is to analyze the Taylor type spectrum of a Banach space
representation of $\mathfrak{A}_{q}$ that comes from the topological homology
sight of $\mathcal{O}_{q}\left(  \mathbb{C}_{xy}\right)  $ obtained by A. Yu.
Prikovskii in \cite{Pir} (see also \cite{Pir09}). One of the central results
of \cite{Pir} asserts that the canonical embedding $\mathfrak{A}%
_{q}\rightarrow\mathcal{O}_{q}\left(  \mathbb{C}_{xy}\right)  $ is a
localization in the sense of Taylor \cite{Tay2}. It turns out that every left
Banach $\mathfrak{A}_{q}$-module possesses the Taylor spectrum as in the
commutative case, which carries a deep information on its homological
properties and the related noncommutative holomorphic functional calculus. In
the commutative case that approach is due to M. Putinar \cite{Put}, that is
the case of $q=1$.

A left Banach $\mathfrak{A}_{q}$-module $X$ is given by a complex Banach space
$X$ and an ordered pair $\left(  T,S\right)  $ of bounded linear operators
from $\mathcal{B}\left(  X\right)  $ such that $TS=q^{-1}ST$. The pair
$\left(  T,S\right)  $ automatically defines a unique continuous algebra
homomorphism $\mathcal{O}_{q}\left(  \mathbb{C}_{xy}\right)  \rightarrow
\mathcal{B}\left(  X\right)  $ that turns $X$ into a left Banach
$\mathcal{O}_{q}\left(  \mathbb{C}_{xy}\right)  $-module. The resolvent
set\textit{ }$\operatorname{res}\left(  T,S\right)  $ of these operators (or
the module $X$) is defined as a set of those $\gamma\in\mathbb{C}_{xy}$ such
that the transversality relation $\mathbb{C}\left(  \gamma\right)
\perp_{\mathcal{O}_{q}\left(  \mathbb{C}_{xy}\right)  }X$ holds, where
$\mathbb{C}\left(  \gamma\right)  $ is the trivial $\mathcal{O}_{q}\left(
\mathbb{C}_{xy}\right)  $-module given by the continuous character $\gamma$.
Recall that $\mathbb{C}\left(  \gamma\right)  \perp_{\mathcal{O}_{q}\left(
\mathbb{C}_{xy}\right)  }X$ means vanishing of all homology groups
$\operatorname{Tor}_{k}^{\mathcal{O}_{q}\left(  \mathbb{C}_{xy}\right)
}\left(  \mathbb{C}\left(  \gamma\right)  ,X\right)  =\left\{  0\right\}  $,
$k\geq0$. The joint (Taylor) spectrum of the pair $\left(  T,S\right)  $ or
the left Banach $\mathcal{O}_{q}\left(  \mathbb{C}_{xy}\right)  $-module $X$
is defined as the complement $\sigma\left(  T,S\right)  =\mathbb{C}%
_{xy}\backslash\operatorname{res}\left(  T,S\right)  $ to the resolvent set.
It turns out that $\gamma\in\sigma\left(  T,S\right)  $ if and only if the
following Banach space complex $\mathcal{K}\left(  \left(  T,S\right)
,\gamma\right)  :$
\[%
\begin{tabular}
[c]{lllllll}%
$0\rightarrow$ & $X$ & $\overset{\left[
\begin{tabular}
[c]{l}%
$\gamma\left(  y\right)  -qS$\\
$T-q\gamma\left(  x\right)  $%
\end{tabular}
\ \ \right]  }{\longrightarrow}$ & $X\oplus X$ & $\overset{\left[
\begin{tabular}
[c]{ll}%
$T-\gamma\left(  x\right)  $ & $S-\gamma\left(  y\right)  $%
\end{tabular}
\ \ \ \right]  }{\longrightarrow}$ & $X$ & $\rightarrow0$%
\end{tabular}
\ \
\]
fails to be exact, that is,
\[
\sigma\left(  T,S\right)  =\left\{  \gamma\in\mathbb{C}_{xy}:\mathcal{K}%
\left(  \left(  T,S\right)  ,\gamma\right)  \text{ is not exact}\right\}  .
\]
Since $\mathcal{K}\left(  \left(  T,S\right)  ,\gamma\right)  $, $\gamma
\in\mathbb{C}_{xy}$ is a parametrized Banach space complex, all properties of
the Slodkowski spectra (see \cite{DosAJM}) and the essential spectrum
\cite{Fain80}, \cite{Cur} of parametrized Banach space complexes are
applicable. We prove that if $T$ and $S$ are invertible operators, then
$\sigma\left(  T,S\right)  =\varnothing$ and $\left\vert q\right\vert =1$. If
$\left\vert q\right\vert \neq1$ then $\sigma\left(  T,S\right)  $ is a
nonempty compact subset in $\mathbb{C}^{2}$ (in the standard topology). As the
main result we prove the following $q$-projection property
\[
\sigma\left(  T,S\right)  \subseteq\left(  \left(  \sigma\left(  T\right)
\cup\sigma\left(  q^{-1}T\right)  \right)  \times\left\{  0\right\}  \right)
\cup\left(  \left\{  0\right\}  \times\left(  \sigma\left(  S\right)
\cup\sigma\left(  qS\right)  \right)  \right)  .
\]
Moreover, we show that the standard forward $\sigma\left(  T,S\right)
|\mathbb{C}_{x}\subseteq\sigma\left(  T\right)  \times\left\{  0\right\}  $
and the backward $\left\{  0\right\}  \times\sigma\left(  S\right)
\subseteq\sigma\left(  T,S\right)  |\mathbb{C}_{y}$ projection properties do
not hold in the general case, that is surprising result compared to the
noncommutative Harte spectrum \cite{Dosi24} with its noncommutative polynomial
spectral mapping property.

As a key example we consider a pair of the unilateral shift operator $T$ and
the diagonal $q$-operator $S$ on the Banach spaces $\ell_{p}$, $1\leq
p<\infty$, and provide the full description of their spectrum $\sigma\left(
T,S\right)  $ and the essential spectrum $\sigma_{e}\left(  T,S\right)  $. In
this case, $\sigma\left(  T\right)  =\mathbb{D}_{1}$ is the closed unit disk
centered at the origin and $\sigma\left(  S\right)  =\left\{  q^{n}%
:n\geq0\right\}  ^{-}$. It turns out that
\[
\left(  \mathbb{D}_{\left\vert q\right\vert ^{-1}}\backslash\mathbb{D}%
_{1}^{\circ}\right)  \times\left\{  0\right\}  \subseteq\sigma\left(
T,S\right)  |\mathbb{C}_{x}\subseteq\mathbb{D}_{\left\vert q\right\vert ^{-1}%
}\times\left\{  0\right\}  ,\quad\sigma\left(  T,S\right)  |\mathbb{C}%
_{y}=\left\{  \left(  0,1\right)  \right\}  ,
\]
where $\mathbb{D}_{r}$ is the closed unit disk of radius $r$ centered at the
origin with its interior $\mathbb{D}_{r}^{\circ}$ and the topological boundary
$\partial\mathbb{D}_{r}$. Thus $\sigma\left(  T,S\right)  |\mathbb{C}%
_{x}\nsubseteqq\sigma\left(  T\right)  \times\left\{  0\right\}  $ and
$\left\{  0\right\}  \times\sigma\left(  S\right)  \nsubseteqq\sigma\left(
T,S\right)  |\mathbb{C}_{y}$.

Finally, for the joint essential spectrum we obtain the following formula
\[
\sigma_{e}\left(  T,S\right)  =\left(  \partial\mathbb{D}_{1}\cup
\partial\mathbb{D}_{\left\vert q\right\vert ^{-1}}\right)  \times\left\{
0\right\}
\]
though $\sigma_{e}\left(  T\right)  =\partial\mathbb{D}_{1}$, which shows
again the failure of the conventional projection property for the joint
essential spectrum.

\section{Preliminaries\label{sPre}}

All considered vector spaces are assumed to be complex. The spectrum of an
element $a$ in an associative algebra $A$ is denoted by $\sigma\left(
a\right)  $, whereas $\operatorname{Rad}A$ denotes the Jacobson radical of $A$.

\subsection{Arens-Michael envelope}

Let $A$ be a complete polynormed (or locally convex) algebra. If the topology
of $A$ is defined by means of a family of multiplicative seminorms, then $A$
is called an \textit{Arens-Michael algebra} \cite[1.2.4]{Hel}. Fix a
polynormed algebra $A$ with its separately continuous multiplication. The
\textit{Arens-Michael envelope} \cite[5.2.21]{Hel} of $A$ is called a pair
$\left(  \widetilde{A},\omega\right)  $, where $\widetilde{A}$ is an
Arens-Michael algebra, $\omega:A\rightarrow\widetilde{A}$ is a continuous
algebra homomorphism with the following \textquotedblright
universal-projective\textquotedblright\ property: for all Arens-Michael
algebra $B$ and a continuous algebra homomorphism $\pi:A\rightarrow B$, there
exists a unique continuous algebra homomorphism $\widetilde{\pi}%
:\widetilde{A}\rightarrow B$ such that $\widetilde{\pi}\cdot\omega=\pi$. It
turns out that an Arens-Michael algebra is an inverse limit of some Banach
algebras \cite[5.2.10]{Hel}. Thus in the latter universal projective property
it can be assumed that all considered algebras $B$ are Banach algebras.

The set of all continuous characters of an Arens-Michael algebra $A$ is
denoted by $\operatorname{Spec}\left(  A\right)  $. If $\lambda\in
\operatorname{Spec}\left(  A\right)  $ then the algebra homomorphism
$\lambda:A\rightarrow\mathbb{C}$ defines $A$-module structure on $\mathbb{C}$
via pull back along $\lambda$. This module is denoted by $\mathbb{C}\left(
\lambda\right)  $ called \textit{a trivial module given by }$\lambda$.

\subsection{The Fr\'{e}chet algebra $\mathcal{O}_{q}\left(  \mathbb{C}%
_{xy}\right)  $\label{sAME}}

The Arens-Michael envelope of the $q$-plane $\mathfrak{A}_{q}$ is denoted by
$\mathcal{O}_{q}\left(  \mathbb{C}^{2}\right)  $ (see \cite{Pir}). It turns
out that $\mathcal{O}_{q}\left(  \mathbb{C}^{2}\right)  =\mathcal{I}_{x}%
\oplus\mathcal{I}_{xy}\oplus\mathcal{I}_{y}$ is a topological direct sum of
the closed unital subalgebra $\mathcal{I}_{x}$ generated by $x$, the closed
two-sided ideal $\mathcal{I}_{xy}$ generated by $xy$, and the closed
non-unital subalgebra $\mathcal{I}_{y}$ generated by $y$ (see \cite{Dosi24}).
In this case, $\operatorname{Rad}\mathcal{O}_{q}\left(  \mathbb{C}^{2}\right)
\subseteq\mathcal{I}_{xy}=\cap\left\{  \ker\left(  \lambda\right)  :\lambda
\in\operatorname{Spec}\left(  \mathcal{O}_{q}\left(  \mathbb{C}^{2}\right)
\right)  \right\}  $ and%
\[
\operatorname{Spec}\left(  \mathcal{O}_{q}\left(  \mathbb{C}^{2}\right)
\right)  =\operatorname{Spec}\left(  \mathcal{O}_{q}\left(  \mathbb{C}%
^{2}\right)  /\mathcal{I}_{xy}\right)  =\mathbb{C}_{xy},
\]
where $\mathbb{C}_{xy}=\mathbb{C}_{x}\cup\mathbb{C}_{y}$, $\mathbb{C}%
_{x}=\mathbb{C\times}\left\{  0\right\}  \subseteq\mathbb{C}^{2}$,
$\mathbb{C}_{y}=\left\{  0\right\}  \times\mathbb{C\subseteq C}^{2}$. If
$\left\vert q\right\vert \neq1$ then the algebra $\mathcal{O}_{q}\left(
\mathbb{C}^{2}\right)  $ is commutative modulo its Jacobson radical
$\operatorname{Rad}\mathcal{O}_{q}\left(  \mathbb{C}^{2}\right)  $ and
$\mathcal{I}_{xy}=\operatorname{Rad}\mathcal{O}_{q}\left(  \mathbb{C}%
^{2}\right)  $, that is, the topological direct sum decomposition
\[
\mathcal{O}_{q}\left(  \mathbb{C}^{2}\right)  =\mathcal{O}\left(
\mathbb{C}_{x}\right)  \oplus\operatorname{Rad}\mathcal{O}_{q}\left(
\mathbb{C}^{2}\right)  \oplus\mathcal{I}_{y}%
\]
holds. Thus the noncommutative analytic space $\mathbb{C}_{xy}=\mathbb{C}%
_{x}\cup\mathbb{C}_{y}$ is a union of two copies of the complex plane. One
needs to equip $\mathbb{C}_{xy}$ with a suitable topology that would affiliate
to the noncommutative multiplication of the quantum plane (see \cite{Dosi24}).
The algebra $\mathcal{O}_{q}\left(  \mathbb{C}^{2}\right)  $ turns out to be
commutative modulo its Jacobson radical $\operatorname{Rad}\mathcal{O}%
_{q}\left(  \mathbb{C}^{2}\right)  $ and $\mathcal{I}_{xy}=\operatorname{Rad}%
\mathcal{O}_{q}\left(  \mathbb{C}^{2}\right)  $. In this case, we use the
notation $\mathcal{O}_{q}\left(  \mathbb{C}_{xy}\right)  $ instead of
$\mathcal{O}_{q}\left(  \mathbb{C}^{2}\right)  $ by indicating to the fact
that it can be extended up to a Fr\'{e}chet $\widehat{\otimes}$-algebra
presheaf $\mathcal{O}_{q}$ on the space $\mathbb{C}_{xy}$ \cite{Dosi24}. If
$\left\vert q\right\vert =1$ then none of the monomials $x^{i}y^{k}%
\in\mathcal{I}_{xy}$, $i,k\in\mathbb{N}$ is quasinilpotent, whereas
$\mathcal{I}_{xy}$ contains all quasinilpotent elements.

\section{The Slodkowski spectra of left $\mathfrak{A}_{q}$-Fr\'{e}chet
modules}

In this section we introduce the Slodkowski (Taylor) spectra of the left
Banach $\mathfrak{A}_{q}$-modules on the topological homology background and
investigate the transversality relations.

\subsection{The Slodkowski, Taylor and essential spectra\label{subsecSTE}}

Let $X$ be a left Banach $\mathfrak{A}_{q}$-module given by a pair $\left(
T,S\right)  $ of operators from $\mathcal{B}\left(  X\right)  $ with the
property $TS=q^{-1}ST$. Since $\mathcal{O}_{q}\left(  \mathbb{C}^{2}\right)  $
is the Arens-Michael envelope of $\mathfrak{A}_{q}$, it follows that the same
couple $\left(  T,S\right)  $ provides a left Banach $\mathcal{O}_{q}\left(
\mathbb{C}^{2}\right)  $-module structure on $X$ extending the original one.
Let us introduce the following parametrized over the character space
$\mathbb{C}_{xy}$ Banach space complex
\begin{equation}
\mathcal{K}\left(  \left(  T,S\right)  ,\gamma\right)  :0\rightarrow
X\overset{d_{\gamma}^{0}}{\longrightarrow}X\oplus X\overset{d_{\gamma}%
^{1}}{\longrightarrow}X\rightarrow0 \label{Xkos}%
\end{equation}
with the differentials
\[
d_{\gamma}^{0}=\left[
\begin{tabular}
[c]{l}%
$\gamma\left(  y\right)  -qS$\\
$T-q\gamma\left(  x\right)  $%
\end{tabular}
\ \right]  \quad\text{and}\quad d_{\gamma}^{1}=\left[
\begin{tabular}
[c]{ll}%
$T-\gamma\left(  x\right)  $ & $S-\gamma\left(  y\right)  $%
\end{tabular}
\ \right]  .
\]
Notice that $\gamma\left(  x\right)  \gamma\left(  y\right)  =0$ for every
$\gamma\in\mathbb{C}_{xy}$. One can consider the related Slodkowski spectra
$\sigma^{\pi,n}\left(  T,S\right)  $, $\sigma^{\delta,n}\left(  T,S\right)  $,
$0\leq n\leq2$, and the essential spectrum $\sigma_{e}\left(  T,S\right)  $
(see \cite{Tay}, \cite{DosJOT1} and \cite{DosAJM}) of the parametrized Banach
space complex (\ref{Xkos}). Namely, put $\Sigma^{n}\left(  T,S\right)
=\left\{  \gamma\in\mathbb{C}_{xy}:H^{n}\left(  \gamma\right)  \neq\left\{
0\right\}  \right\}  $, where $H^{n}\left(  \gamma\right)  $ is the $n$th
cohomology group of the complex $\mathcal{K}\left(  \left(  T,S\right)
,\gamma\right)  $. Recall that
\[
\sigma^{\pi,n}\left(  T,S\right)  =\left\{  \gamma\in\bigcup\limits_{k\leq
n}\Sigma^{n}\left(  T,S\right)  \text{ or }\operatorname{im}\left(  d_{\gamma
}^{n}\right)  \text{ is not closed}\right\}  ,\text{ and }\sigma^{\delta
,n}\left(  T,S\right)  =\bigcup\limits_{k\geq n}\Sigma^{n}\left(  T,S\right)
\text{. }%
\]
\textit{The joint spectrum of the pair }$\left(  T,S\right)  $ is defined to
be the following set
\[
\sigma\left(  T,S\right)  =\left\{  \gamma\in\mathbb{C}_{xy}:\mathcal{K}%
\left(  \left(  T,S\right)  ,\gamma\right)  \text{ is not exact}\right\}  ,
\]
whereas \textit{the joint essential spectrum of the pair} $\left(  T,S\right)
$ is defined as its subset
\[
\sigma_{e}\left(  T,S\right)  =\left\{  \gamma\in\mathbb{C}_{xy}%
:\mathcal{K}\left(  \left(  T,S\right)  ,\gamma\right)  \text{ is not
Fredholm}\right\}  .
\]
Recall that a Banach space complex is said to be Fredholm if all its
(co)homology groups are finite dimensional. Notice that
\[
\sigma\left(  T,S\right)  =\bigcup\limits_{k}\Sigma^{k}\left(  T,S\right)
=\sigma^{\pi,2}\left(  T,S\right)  =\sigma^{\delta,0}\left(  T,S\right)  .
\]
It is an analog of the Taylor joint spectrum of the operator pair $\left(
T,S\right)  $, and it can also be applied to the case of a left Fr\'{e}chet
$\mathcal{O}_{q}\left(  \mathbb{C}_{xy}\right)  $-module $X$ given by a pair
$\left(  T,S\right)  $ of continuous linear operators acting on $X$ with
$TS=q^{-1}ST$. Since $\operatorname{Spec}\left(  \mathcal{O}_{q}\left(
\mathbb{C}^{2}\right)  \right)  =\mathbb{C}_{xy}=\mathbb{C}_{x}\cup
\mathbb{C}_{y}$, it follows that
\begin{align*}
\sigma\left(  T,S\right)   &  =\sigma_{x}\left(  T,S\right)  \cup\sigma
_{y}\left(  T,S\right)  \text{ with}\\
\sigma_{x}\left(  T,S\right)   &  =\sigma\left(  T,S\right)  \cap
\mathbb{C}_{x}\text{ and }\sigma_{y}\left(  T,S\right)  =\sigma\left(
T,S\right)  \cap\mathbb{C}_{y}.
\end{align*}
If $\lambda\in\sigma_{x}\left(  T,S\right)  $ then the differentials of the
complex $\mathcal{K}\left(  \left(  T,S\right)  ,\lambda\right)  $ are
converted into
\[
d_{\lambda}^{0}=\left[
\begin{tabular}
[c]{l}%
$-qS$\\
$T-q\lambda$%
\end{tabular}
\ \ \ \right]  \quad\text{and}\quad d_{\lambda}^{1}=\left[
\begin{tabular}
[c]{ll}%
$T-\lambda$ & $S$%
\end{tabular}
\ \ \ \right]  .
\]
Notice that the first (nonzero) homology group $H^{0}\left(  \lambda\right)  $
of the complex $\mathcal{K}\left(  \left(  T,S\right)  ,\lambda\right)  $
corresponds to the joint eigenvalue $\left(  q\lambda,0\right)  $ of the pair
$\left(  T,S\right)  $, whereas nonzero $H^{2}\left(  \lambda\right)  $ occurs
if $\operatorname{im}\left(  T-\lambda\right)  +\operatorname{im}\left(
S\right)  \neq X$. If $\mu\in\sigma_{y}\left(  T,S\right)  $ then we come up
with the following non-exact Banach space complex $\mathcal{K}\left(  \left(
T,S\right)  ,\mu\right)  $ whose differentials are given by
\[
d_{\mu}^{0}=\left[
\begin{tabular}
[c]{l}%
$\mu-qS$\\
$T$%
\end{tabular}
\ \ \ \ \right]  \quad\text{and}\quad d_{\mu}^{1}=\left[
\begin{tabular}
[c]{ll}%
$T$ & $S-\mu$%
\end{tabular}
\ \ \ \right]  .
\]
In this case, a nonzero $H^{0}\left(  \mu\right)  $ responds to the joint
eigenvalue $\left(  0,q^{-1}\mu\right)  $ of the operator pair $\left(
T,S\right)  $, whereas a nonzero $H^{2}\left(  \mu\right)  $ occurs if
$\operatorname{im}\left(  T\right)  +\operatorname{im}\left(  S-\mu\right)
\neq X$.

\subsection{The resolution of the quantum plane}

The quantum plane $\mathfrak{A}_{q}$ possesses (see \cite{Tah}, \cite{WM}) the
following free $\mathfrak{A}_{q}$-bimodule resolution
\begin{equation}
0\leftarrow\mathfrak{A}_{q}\overset{\pi}{\longleftarrow}\mathfrak{A}%
_{q}\otimes\mathfrak{A}_{q}\overset{\partial_{0}}{\longleftarrow}%
\mathfrak{A}_{q}\otimes\mathbb{C}^{2}\otimes\mathfrak{A}_{q}\overset{\partial
_{1}}{\longleftarrow}\mathfrak{A}_{q}\otimes\wedge^{2}\mathbb{C}^{2}%
\otimes\mathfrak{A}_{q}\leftarrow0, \label{Ures}%
\end{equation}
with the mapping $\pi\left(  a\otimes b\right)  =ab$ and the differentials
\begin{align*}
\partial_{0}\left(  a\otimes e_{1}\otimes b\right)   &  =a\otimes xb-ax\otimes
b,\quad\partial_{0}\left(  a\otimes e_{2}\otimes b\right)  =a\otimes
yb-ay\otimes b,\\
\partial_{1}\left(  a\otimes e_{1}\wedge e_{2}\otimes b\right)   &  =a\otimes
e_{2}\otimes xb-qax\otimes e_{2}\otimes b-qa\otimes e_{1}\otimes yb+ay\otimes
e_{1}\otimes b,
\end{align*}
where $a,b\in\mathfrak{A}_{q}$ and $\left(  e_{1},e_{2}\right)  $ is the
standard basis of $\mathbb{C}^{2}$. One of the central results of \cite{Pir}
asserts that the canonical embedding $\mathfrak{A}_{q}\rightarrow
\mathcal{O}_{q}\left(  \mathbb{C}^{2}\right)  $ is a localization in the sense
of Taylor \cite{Tay2}. Using the resolution (\ref{Ures}), we derive that the
following (a similar) complex%
\[
0\leftarrow\mathcal{O}_{q}\left(  \mathbb{C}^{2}\right)  \overset{\pi
}{\longleftarrow}\mathcal{O}_{q}\left(  \mathbb{C}^{2}\right)
^{\widehat{\otimes}2}\overset{d_{0}}{\longleftarrow}\mathcal{O}_{q}\left(
\mathbb{C}^{2}\right)  \widehat{\otimes}\mathbb{C}^{2}\widehat{\otimes
}\mathcal{O}_{q}\left(  \mathbb{C}^{2}\right)  \overset{d_{1}}{\longleftarrow
}\mathcal{O}_{q}\left(  \mathbb{C}^{2}\right)  \widehat{\otimes}\wedge
^{2}\mathbb{C}^{2}\widehat{\otimes}\mathcal{O}_{q}\left(  \mathbb{C}%
^{2}\right)  \leftarrow0
\]
is admissible, which provides a free $\mathcal{O}_{q}\left(  \mathbb{C}%
^{2}\right)  $-bimodule resolution of $\mathcal{O}_{q}\left(  \mathbb{C}%
^{2}\right)  $ (see \cite[3.2]{HelHom}). One needs to apply the functor
$\mathcal{O}_{q}\left(  \mathbb{C}^{2}\right)  \underset{\mathfrak{A}%
_{q}}{\widehat{\otimes}}\circ\underset{\mathfrak{A}_{q}}{\widehat{\otimes}%
}\mathcal{O}_{q}\left(  \mathbb{C}^{2}\right)  $ to the complex (\ref{Ures}).
We use a bit modified cochain version of this resolution
\[
\mathcal{R}\left(  \mathcal{O}_{q}\left(  \mathbb{C}^{2}\right)
^{\widehat{\otimes}2}\right)  :0\rightarrow\mathcal{O}_{q}\left(
\mathbb{C}^{2}\right)  ^{\widehat{\otimes}2}\overset{d^{0}}{\longrightarrow
}\mathcal{O}_{q}\left(  \mathbb{C}^{2}\right)  ^{\widehat{\otimes}2}%
\oplus\mathcal{O}_{q}\left(  \mathbb{C}^{2}\right)  ^{\widehat{\otimes}%
2}\overset{d^{1}}{\longrightarrow}\mathcal{O}_{q}\left(  \mathbb{C}%
^{2}\right)  ^{\widehat{\otimes}2}\rightarrow0,
\]
whose differentials can be written in the following operator matrix shapes
\begin{equation}
d^{0}=\left[
\begin{array}
[c]{c}%
R_{y}\otimes1-q1\otimes L_{y}\\
1\otimes L_{x}-qR_{x}\otimes1
\end{array}
\right]  ,\text{ }d^{1}=\left[
\begin{array}
[c]{cc}%
1\otimes L_{x}-R_{x}\otimes1 & 1\otimes L_{y}-R_{y}\otimes1
\end{array}
\right]  , \label{d}%
\end{equation}
where $L$ and $R$ indicate to the left and right multiplication operators on
$\mathcal{O}_{q}\left(  \mathbb{C}^{2}\right)  $.

\subsection{The spectrum of a left Fr\'{e}chet $\mathcal{O}_{q}\left(
\mathbb{C}^{2}\right)  $-module}

Let us introduce the spectrum of a left Fr\'{e}chet $\mathcal{O}_{q}\left(
\mathbb{C}^{2}\right)  $-module $X$ within the general framework of the
spectral theory of left modules over Fr\'{e}chet sheaves \cite{Dos}.

Let $X$ be a left Fr\'{e}chet $\mathcal{O}_{q}\left(  \mathbb{C}^{2}\right)
$-module given by an operator pair $\left(  T,S\right)  $ with $TS=q^{-1}ST$.
By applying the functor $\circ\underset{\mathcal{O}_{q}\left(  \mathbb{C}%
^{2}\right)  }{\widehat{\otimes}}X$ to the indicated bimodule resolution
$\mathcal{R}\left(  \mathcal{O}_{q}\left(  \mathbb{C}^{2}\right)
^{\widehat{\otimes}2}\right)  $ of $\mathcal{O}_{q}\left(  \mathbb{C}%
^{2}\right)  $, we obtain the following free left $\mathcal{O}_{q}\left(
\mathbb{C}^{2}\right)  $-module resolution
\begin{equation}
\mathcal{R}\left(  \mathcal{O}_{q}\left(  \mathbb{C}^{2}\right)
\widehat{\otimes}X\right)  :0\rightarrow\mathcal{O}_{q}\left(  \mathbb{C}%
^{2}\right)  \widehat{\otimes}X\overset{d^{0}}{\longrightarrow}\left(
\mathcal{O}_{q}\left(  \mathbb{C}^{2}\right)  \widehat{\otimes}X\right)
^{\oplus2}\overset{d^{1}}{\longrightarrow}\mathcal{O}_{q}\left(
\mathbb{C}^{2}\right)  \widehat{\otimes}X\rightarrow0 \label{reX}%
\end{equation}
for the module $X$ with the differentials (see (\ref{d}))%
\[
d^{0}=\left[
\begin{array}
[c]{c}%
R_{y}\otimes1-q1\otimes S\\
1\otimes T-qR_{x}\otimes1
\end{array}
\right]  \text{, }d^{1}=\left[
\begin{array}
[c]{cc}%
1\otimes T-R_{x}\otimes1 & 1\otimes S-R_{y}\otimes1
\end{array}
\right]  .
\]
Thus the complex
\[
\mathcal{R}\left(  \mathcal{O}_{q}\left(  \mathbb{C}^{2}\right)
\widehat{\otimes}X\right)  \overset{\pi_{X}}{\longrightarrow}X\rightarrow
0\quad\text{with }\pi_{X}\left(  f\otimes\xi\right)  =f\xi
\]
is admissible. \textit{The resolvent set }$\operatorname{res}\left(
T,S\right)  $ of these operators (or the module $X$) is defined as a set of
those $\gamma\in\mathbb{C}^{2}$ such that the transversality relation
$\mathbb{C}\left(  \gamma\right)  \perp_{\mathcal{O}_{q}\left(  \mathbb{C}%
^{2}\right)  }X$ holds, that is, $\operatorname{Tor}_{k}^{\mathcal{O}%
_{q}\left(  \mathbb{C}^{2}\right)  }\left(  \mathbb{C}\left(  \gamma\right)
,X\right)  =\left\{  0\right\}  $ for all $k\geq0$.

\begin{lemma}
\label{lemRESTS}Let $X$ be a left Fr\'{e}chet $\mathcal{O}_{q}\left(
\mathbb{C}^{2}\right)  $-module given by an operator pair $\left(  T,S\right)
$ with $TS=q^{-1}ST$. The equality
\[
\sigma\left(  T,S\right)  =\mathbb{C}^{2}\backslash\operatorname{res}\left(
T,S\right)
\]
holds.
\end{lemma}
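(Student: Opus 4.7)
The plan is to identify the parametrized Koszul-type complex $\mathcal{K}((T,S),\gamma)$ with the complex obtained from the free resolution $\mathcal{R}(\mathcal{O}_q(\mathbb{C}^2)\widehat{\otimes}X)$ in (\ref{reX}) after tensoring with the trivial right module $\mathbb{C}(\gamma)$ over $\mathcal{O}_q(\mathbb{C}^2)$. Since the resolution is admissible and consists of free (hence projective) left $\mathcal{O}_q(\mathbb{C}^2)$-modules, its cohomology after applying $\mathbb{C}(\gamma)\widehat{\otimes}_{\mathcal{O}_q(\mathbb{C}^2)}(-)$ computes exactly the $\operatorname{Tor}$ groups $\operatorname{Tor}^{\mathcal{O}_q(\mathbb{C}^2)}_k(\mathbb{C}(\gamma),X)$ for $k=0,1,2$. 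Once I identify the resulting complex with $\mathcal{K}((T,S),\gamma)$ on the nose, the equivalence ``$\gamma\in\operatorname{res}(T,S)\Leftrightarrow\gamma\notin\sigma(T,S)$'' will be immediate.

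The main computational step is to carry out the canonical identification $\mathbb{C}(\gamma)\widehat{\otimes}_{\mathcal{O}_q(\mathbb{C}^2)}(\mathcal{O}_q(\mathbb{C}^2)\widehat{\otimes}V)\simeq V$, for any Banach space $V$, sending $1\otimes f\otimes v$ to $\gamma(f)v$. Applying this separately in degrees $0,1,2$ with $V=X$, $V=X\oplus X$, $V=X$, collapses the complex $\mathcal{R}(\mathcal{O}_q(\mathbb{C}^2)\widehat{\otimes}X)$ to
\[
0\to X\longrightarrow X\oplus X\longrightarrow X\to 0.
\]
Under the identification, the right-multiplication operators $R_x$ and $R_y$ descend to the scalars $\gamma(x)$ and $\gamma(y)$ (since $\gamma$ is a continuous character, $\gamma(fx)=\gamma(f)\gamma(x)$), while $1\otimes T$ and $1\otimes S$ remain $T$ and $S$. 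Plugging these substitutions into the operator matrices for $d^0,d^1$ displayed in (\ref{reX}) yields exactly the matrices $d^0_\gamma,d^1_\gamma$ of $\mathcal{K}((T,S),\gamma)$.

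With this identification, the vanishing of all $\operatorname{Tor}^{\mathcal{O}_q(\mathbb{C}^2)}_k(\mathbb{C}(\gamma),X)$, i.e.\ the transversality $\mathbb{C}(\gamma)\perp_{\mathcal{O}_q(\mathbb{C}^2)}X$, is equivalent to the exactness of $\mathcal{K}((T,S),\gamma)$, hence $\gamma\in\operatorname{res}(T,S)$ iff $\gamma\in\mathbb{C}_{xy}\setminus\sigma(T,S)$. For the remaining points $\gamma\in\mathbb{C}^2\setminus\mathbb{C}_{xy}$ (where necessarily $\gamma(x)\gamma(y)\neq 0$), a direct check using $TS=q^{-1}ST$ shows $d^1_\gamma d^0_\gamma=(q-1)\gamma(x)\gamma(y)\cdot 1\neq 0$, so $\gamma$ cannot belong to the Taylor spectrum as defined via a cochain complex; such $\gamma$ lie vacuously in the resolvent set, and equality on $\mathbb{C}^2$ follows.

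The only delicate point is the bookkeeping in the identification of the differentials: one must be careful that the ``right'' operators $R_x,R_y$ acting on the left $\mathcal{O}_q(\mathbb{C}^2)$-factor of the free modules are precisely the ones that get replaced by the scalars $\gamma(x),\gamma(y)$ after balancing the tensor product with $\mathbb{C}(\gamma)$, whereas the operators that came from the $X$-factor remain $T$ and $S$. This is routine but it is where the proof really lives; everything else is standard topological homology.
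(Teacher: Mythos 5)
Your proof is correct and follows essentially the same route as the paper: apply $\mathbb{C}(\gamma)\widehat{\otimes}_{\mathcal{O}_q(\mathbb{C}^2)}(-)$ to the admissible free resolution $\mathcal{R}(\mathcal{O}_q(\mathbb{C}^2)\widehat{\otimes}X)$, identify the resulting complex with $\mathcal{K}((T,S),\gamma)$ via the canonical collapse $\mathbb{C}(\gamma)\widehat{\otimes}_{\mathcal{O}_q(\mathbb{C}^2)}(\mathcal{O}_q(\mathbb{C}^2)\widehat{\otimes}V)\simeq V$ with $R_x,R_y\mapsto\gamma(x),\gamma(y)$, and read off the transversality condition as exactness of $\mathcal{K}((T,S),\gamma)$. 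Your added remark that $d^1_\gamma d^0_\gamma=(q-1)\gamma(x)\gamma(y)\cdot 1_X$, so the Koszul sequence is even a complex only when $\gamma\in\mathbb{C}_{xy}$, is a slight refinement the paper's proof omits (it stops at $\mathbb{C}_{xy}\setminus\operatorname{res}(T,S)=\sigma(T,S)$), and it cleanly reconciles the statement's $\mathbb{C}^2$ with $\operatorname{Spec}(\mathcal{O}_q(\mathbb{C}^2))=\mathbb{C}_{xy}$.
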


\begin{proof}
The homology groups $\operatorname{Tor}_{k}^{\mathcal{O}_{q}\left(
\mathbb{C}^{2}\right)  }\left(  \mathbb{C}\left(  \gamma\right)  ,X\right)  $
can be calculated by means of the resolution $\mathcal{R}\left(
\mathcal{O}_{q}\left(  \mathbb{C}^{2}\right)  \widehat{\otimes}X\right)  $
from (\ref{reX}). By applying the functor $\mathbb{C}\left(  \gamma\right)
\underset{\mathcal{O}_{q}\left(  \mathbb{C}^{2}\right)  }{\widehat{\otimes}%
}\circ$ to the resolution $\mathcal{R}\left(  \mathcal{O}_{q}\left(
\mathbb{C}^{2}\right)  \widehat{\otimes}X\right)  $, we derive that
$\operatorname{Tor}_{k}^{\mathcal{O}_{q}\left(  \mathbb{C}^{2}\right)
}\left(  \mathbb{C}\left(  \gamma\right)  ,X\right)  $ are the homology groups
of the following complex
\begin{align*}
0  &  \rightarrow X\overset{d_{\gamma}^{0}}{\longrightarrow}X\oplus
X\overset{d_{\gamma}^{1}}{\longrightarrow}X\rightarrow0,\\
d_{\gamma}^{0}  &  =\left[
\begin{array}
[c]{c}%
\gamma\left(  y\right)  -qS\\
T-q\gamma\left(  x\right)
\end{array}
\right]  \text{, }d_{\gamma}^{1}=\left[
\begin{array}
[c]{cc}%
T-\gamma\left(  x\right)  & S-\gamma\left(  y\right)
\end{array}
\right]  ,
\end{align*}
which is reduced to the complex $\mathcal{K}\left(  \left(  T,S\right)
,\gamma\right)  $ considered above in Subsection \ref{subsecSTE}. Thus
$\operatorname{res}\left(  T,S\right)  $ consists of those $\gamma
\in\mathbb{C}_{xy}$ such that $\mathcal{K}\left(  \left(  T,S\right)
,\gamma\right)  $ is exact. Hence $\mathbb{C}_{xy}\backslash\operatorname{res}%
\left(  T,S\right)  =\sigma\left(  T,S\right)  $.
\end{proof}

Thus the Taylor spectrum $\sigma\left(  T,S\right)  $ is defined in terms of
the transversality relation of the trivial modules $\mathbb{C}\left(
\gamma\right)  $, $\gamma\in\mathbb{C}_{xy}$ with respect to the given left
Fr\'{e}chet $\mathcal{O}_{q}\left(  \mathbb{C}^{2}\right)  $-module.

\section{The pseudo-commuting pairs of operators}

In this section we introduce the pseudo-commuting pairs of bounded linear
operators and their joint spectra that allow to describe the Taylor spectrum
of a left Banach $\mathfrak{A}_{q}$-module. We also prove the key result on
the $q$-projection property for a left Banach $\mathfrak{A}_{q}$-module.

\subsection{The left spectrum of a pseudo-commuting pairs of operators}

Let $\left(  T,S\right)  $ be an ordered pair of bounded linear operators
acting on a Banach space $X$. We say that $\left(  T,S\right)  $ is a
\textit{pseudo-commuting pair} if $TS=SC$ for a certain $C\in\mathcal{B}%
\left(  X\right)  $. In particular, $\left(  \lambda-T\right)  S=S\left(
\lambda-C\right)  $ for all $\lambda\in\mathbb{C}$. The pseudo-commuting pair
$\left(  T,S\right)  $ generates the following (continuously) parametrized
\cite[2.1]{Tay} on $\mathbb{C}$ (cochain) Banach space complex
\[
\mathcal{L}_{\lambda}\left(  T,S\right)  :0\rightarrow X\overset{l_{\lambda
}^{0}}{\longrightarrow}X\oplus X\overset{l_{\lambda}^{1}}{\longrightarrow
}X\rightarrow0
\]
with the differentials%
\[
l_{\lambda}^{0}=\left[
\begin{tabular}
[c]{l}%
$S$\\
$\lambda-C$%
\end{tabular}
\right]  \quad\text{and}\quad l_{\lambda}^{1}=\left[
\begin{tabular}
[c]{ll}%
$\lambda-T$ & $-S$%
\end{tabular}
\right]  .
\]
Thus $l_{\lambda}^{1}l_{\lambda}^{0}=\left(  \lambda-T\right)  S-S\left(
\lambda-C\right)  =0$ for all $\lambda$. We say that $\lambda$ is \textit{a
left spectral point of the pair }$\left(  T,S\right)  $ if the complex
$\mathcal{L}_{\lambda}\left(  T,S\right)  $ fails to be exact. The set of all
left spectral points of $\left(  T,S\right)  $ is denoted by $\sigma
_{l}\left(  T,S\right)  $.

\begin{lemma}
\label{lTSsp}If $\left(  T,S\right)  $ is a pseudo-commuting pair of operators
on a Banach space $X$ with $TS=SC$, then%
\[
\sigma_{l}\left(  T,S\right)  \subseteq\sigma\left(  T\right)  \cup
\sigma\left(  C\right)  .
\]
Moreover, if $S$ is invertible in $\mathcal{B}\left(  X\right)  $ then
$\sigma_{l}\left(  T,S\right)  =\varnothing$.
\end{lemma}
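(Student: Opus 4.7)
The plan is to verify exactness of the parametrized complex $\mathcal{L}_\lambda(T,S)$ at each of its three positions directly, exploiting the intertwining relation $(\lambda-T)S=S(\lambda-C)$ (which is an immediate consequence of $TS=SC$).

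For the first containment, fix $\lambda\notin\sigma(T)\cup\sigma(C)$, so both $\lambda-T$ and $\lambda-C$ are invertible in $\mathcal{B}(X)$. Injectivity of $l_\lambda^0$ follows because $l_\lambda^0 x=0$ gives $(\lambda-C)x=0$, hence $x=0$. Surjectivity of $l_\lambda^1$ follows because for every $y\in X$, the element $\bigl((\lambda-T)^{-1}y,\,0\bigr)$ is mapped to $y$. For the middle cohomology, take $(u,v)\in\ker l_\lambda^1$, so $(\lambda-T)u=Sv$, and set $x=(\lambda-C)^{-1}v$. Then $(\lambda-C)x=v$ by construction, and
\[
Sx=S(\lambda-C)^{-1}v=(\lambda-T)^{-1}Sv=(\lambda-T)^{-1}(\lambda-T)u=u,
\]
where the second equality uses the intertwining relation rewritten as $S(\lambda-C)^{-1}=(\lambda-T)^{-1}S$. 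Thus $(u,v)=l_\lambda^0 x$, and the complex $\mathcal{L}_\lambda(T,S)$ is exact, so $\lambda\notin\sigma_l(T,S)$.

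For the second assertion, assume $S\in\mathcal{B}(X)$ is invertible. Note that $TS=SC$ yields $(\lambda-C)S^{-1}=S^{-1}(\lambda-T)$ for every $\lambda\in\mathbb{C}$, even without invertibility of $\lambda-T$ or $\lambda-C$. I verify exactness for every $\lambda$ directly. Injectivity of $l_\lambda^0$: $Sx=0$ forces $x=0$ since $S$ is invertible. Surjectivity of $l_\lambda^1$: for $y\in X$, the element $(0,-S^{-1}y)$ is mapped to $y$. For middle exactness, given $(u,v)$ with $(\lambda-T)u=Sv$, set $x=S^{-1}u$; then $Sx=u$, and
\[
(\lambda-C)x=(\lambda-C)S^{-1}u=S^{-1}(\lambda-T)u=S^{-1}Sv=v.
\]
Hence $l_\lambda^0 x=(u,v)$, and $\mathcal{L}_\lambda(T,S)$ is exact for every $\lambda\in\mathbb{C}$, giving $\sigma_l(T,S)=\varnothing$.

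There is no real obstacle here: both claims reduce to writing down explicit preimages. The only thing to keep in mind is to apply the intertwining in the correct algebraic form, namely $S(\lambda-C)^{-1}=(\lambda-T)^{-1}S$ in the first part and $(\lambda-C)S^{-1}=S^{-1}(\lambda-T)$ in the second, so that the candidate element produced at one position matches the required value at the other.
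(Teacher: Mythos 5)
Your proof is correct and follows essentially the same route as the paper: in both parts you verify exactness of $\mathcal{L}_\lambda(T,S)$ at all three positions by producing explicit preimages, using the intertwining relation $(\lambda-T)S=S(\lambda-C)$ (equivalently $S(\lambda-C)^{-1}=(\lambda-T)^{-1}S$ in the first part and $(\lambda-C)S^{-1}=S^{-1}(\lambda-T)$ in the second) exactly as the paper does, with only the cosmetic difference that you construct the candidate preimage $x$ and then verify $Sx=u$, whereas the paper first derives $\xi=(\lambda-T)^{-1}S\eta$ and then rewrites it as $S\theta$.
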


\begin{proof}
If $\lambda\notin\sigma\left(  T\right)  \cup\sigma\left(  C\right)  $ then
obviously, $\ker\left(  l_{\lambda}^{0}\right)  =\ker\left(  S\right)
\cap\ker\left(  \lambda-C\right)  =\left\{  0\right\}  $ and
$\operatorname{im}\left(  l_{\lambda}^{1}\right)  =\left(  \lambda-T\right)
X+SX=X$. Next, if $\left(  \xi,\eta\right)  \in\ker\left(  l_{\lambda}%
^{1}\right)  $, then $\left(  \lambda-T\right)  \xi=S\eta$, which in turn
implies that $\xi=\left(  \lambda-T\right)  ^{-1}S\eta$. With $\left(
\lambda-T\right)  S=S\left(  \lambda-C\right)  $ in mind, infer that
$\xi=S\theta$, where $\theta=\left(  \lambda-C\right)  ^{-1}\eta$. Then
\[
l_{\lambda}^{0}\theta=\left(  S\theta,\left(  \lambda-C\right)  \theta\right)
=\left(  \xi,\eta\right)  .
\]
Thus the complex $\mathcal{L}_{\lambda}\left(  T,S\right)  $ is exact, or
$\lambda\notin\sigma_{l}\left(  T,S\right)  $. Hence $\sigma_{l}\left(
T,S\right)  \subseteq\sigma\left(  T\right)  \cup\sigma\left(  C\right)  $.

Now assume that $S$ is invertible in $\mathcal{B}\left(  X\right)  $. So,
$S^{-1}T=CS^{-1}$. If $l_{\lambda}^{0}\zeta=0$ for some $\zeta\in X$, then
$S\zeta=0$, therefore $\zeta=0$. Moreover, $l_{\lambda}^{1}\left(  X\oplus
X\right)  =\left(  \lambda-T\right)  X+SX=X$. Thus $\ker\left(  l_{\lambda
}^{0}\right)  =\left\{  0\right\}  $ and $\operatorname{im}\left(  l_{\lambda
}^{1}\right)  =X$. Finally, if $\left(  \zeta,\eta\right)  \in\ker\left(
l_{\lambda}^{1}\right)  $ then $\left(  \lambda-T\right)  \zeta=S\eta$ and
$\eta=\left(  \lambda-C\right)  S^{-1}\zeta$. It follows that $l_{\lambda}%
^{0}S^{-1}\zeta=\left(  SS^{-1}\zeta,\left(  \lambda-C\right)  S^{-1}%
\zeta\right)  =\left(  \zeta,\eta\right)  $, that is, $\ker\left(  l_{\lambda
}^{1}\right)  =\operatorname{im}\left(  l_{\lambda}^{0}\right)  $. Thus
$\mathcal{L}_{\lambda}\left(  T,S\right)  $ is exact for every $\lambda
\in\mathbb{C}$, which means that $\sigma_{l}\left(  T,S\right)  =\varnothing$.
\end{proof}

In the case of a left Banach $\mathfrak{A}_{q}$-module $X$ given by an
operator pair $\left(  T,S\right)  $ with $TS=q^{-1}ST$, we see that $\left(
T,S\right)  $ is a pseudo-commuting pair of operators on $X$ with $C=q^{-1}T$.
In particular, we have its left spectrum $\sigma_{l}\left(  T,S\right)  $.
Thus $\lambda\in\sigma_{l}\left(  T,S\right)  $ iff the following complex
\begin{align*}
\mathcal{L}_{\lambda}\left(  T,S\right)   &  :0\rightarrow
X\overset{l_{\lambda}^{0}}{\longrightarrow}X\oplus X\overset{l_{\lambda}%
^{1}}{\longrightarrow}X\rightarrow0,\\
l_{\lambda}^{0}  &  =\left[
\begin{tabular}
[c]{l}%
$S$\\
$\lambda-q^{-1}T$%
\end{tabular}
\right]  \quad\text{and}\quad l_{\lambda}^{1}=\left[
\begin{tabular}
[c]{ll}%
$\lambda-T$ & $-S$%
\end{tabular}
\right]
\end{align*}
fails to be exact. Actually, the action of $\mathfrak{A}_{q}$ on $X$ provides
many other pseudo-commuting pairs of operators on $X$. Namely, if $f\left(
x\right)  =\sum_{n}a_{n}x^{n}\in\mathfrak{A}_{q}$ is a polynomial then
$f\left(  T\right)  =\sum_{n}a_{n}T^{n}$ and $f\left(  T\right)  S=\sum
_{n}a_{n}T^{n}S=\sum_{n}Sa_{n}q^{-n}T^{n}=Sf\left(  q^{-1}T\right)  $, that
is,
\[
f\left(  T\right)  S=Sf\left(  q^{-1}T\right)  .
\]
It means that $\left(  f\left(  T\right)  ,S\right)  $ is a pseudo-commuting
pair with $C=f\left(  q^{-1}T\right)  $. In a similar way, we have
\[
f\left(  qT\right)  S=Sf\left(  T\right)  ,
\]
which means that $\left(  f\left(  qT\right)  ,S\right)  $ is a
pseudo-commuting pair with $C=f\left(  T\right)  $.

Further, the flipped pair $\left(  S,T\right)  $ is pseudo-commuting too,
namely $ST=T\left(  qS\right)  $ with $C=qS$. By \textit{the right spectrum
}of the pair $\left(  T,S\right)  $, we mean the set
\[
\sigma_{r}\left(  T,S\right)  =\sigma_{l}\left(  S,T\right)  .
\]
By its very definition, $\mu\in\sigma_{r}\left(  T,S\right)  $ iff the
following complex
\begin{align*}
\mathcal{R}_{\mu}\left(  T,S\right)   &  :0\rightarrow X\overset{r_{\mu}%
^{0}}{\longrightarrow}X\oplus X\overset{r_{\mu}^{1}}{\longrightarrow
}X\rightarrow0,\\
r_{\mu}^{0}  &  =\left[
\begin{tabular}
[c]{l}%
$T$\\
$\mu-qS$%
\end{tabular}
\ \right]  \quad\text{and}\quad r_{\mu}^{1}=\left[
\begin{tabular}
[c]{ll}%
$\mu-S$ & $-T$%
\end{tabular}
\ \right]
\end{align*}
is not exact. Notice that $\mathcal{R}_{\mu}\left(  T,S\right)  =\mathcal{L}%
_{\mu}\left(  S,T\right)  $.

As above if $g\left(  y\right)  =\sum_{n}b_{n}y^{n}\in\mathfrak{A}_{q}$ is a
polynomial, then $g\left(  S\right)  T=\sum_{n}b_{n}S^{n}T=\sum_{n}Tb_{n}%
q^{n}S^{n}=Tg\left(  qS\right)  $, and $g\left(  q^{-1}S\right)  T=Tg\left(
S\right)  $, that is, $\left(  g\left(  S\right)  ,T\right)  $ and $\left(
g\left(  q^{-1}S\right)  ,T\right)  $ are pseudo-commuting pairs on $X$. One
can replace $T$ or $S$ by the monomials $T^{k}$ or $S^{k}$. In this case,
$T^{k}S=q^{-k}ST^{k}$ or $TS^{k}=q^{-k}S^{k}T$ and the number $q$ is replaced
by $q^{k}$. They are pairs of the left Banach $\mathfrak{A}_{q^{k}}$-module
$X$.

\subsection{The $q$-projection property, compactness and nonvoidness of the
spectrum}

As above, let $X$ be a left Banach $\mathfrak{A}_{q}$-module given by a pair
$\left(  T,S\right)  $ with $TS=q^{-1}ST$.

\begin{theorem}
\label{lemSpLR}The equalities
\[
\sigma_{x}\left(  T,S\right)  =\sigma_{l}\left(  T,S\right)  \times\left\{
0\right\}  \text{\quad and\quad}\sigma_{y}\left(  T,S\right)  =\left\{
0\right\}  \times\sigma_{r}\left(  T,S\right)
\]
hold. Thus $\sigma\left(  T,S\right)  =\left(  \sigma_{l}\left(  T,S\right)
\times\left\{  0\right\}  \right)  \cup\left(  \left\{  0\right\}
\times\sigma_{r}\left(  T,S\right)  \right)  $ is a union of the compact
subsets and the following $q$-projection property
\[
\sigma\left(  T,S\right)  \subseteq\left(  \left(  \sigma\left(  T\right)
\cup\sigma\left(  q^{-1}T\right)  \right)  \times\left\{  0\right\}  \right)
\cup\left(  \left\{  0\right\}  \times\left(  \sigma\left(  S\right)
\cup\sigma\left(  qS\right)  \right)  \right)
\]
holds.
\end{theorem}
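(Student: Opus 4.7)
The plan is to prove the two identities by directly matching the complexes $\mathcal{K}((T,S),\gamma)$ against $\mathcal{L}_\lambda(T,S)$ and $\mathcal{R}_\mu(T,S)$ coordinate by coordinate, and then to read off the $q$-projection property and compactness as corollaries of Lemma \ref{lTSsp}. Since $\mathbb{C}_{xy} = \mathbb{C}_x \cup \mathbb{C}_y$, it suffices to handle the two strata separately, and the proof is essentially bookkeeping once the right identifications are in place.

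For the first equality, I would take $\gamma = \lambda \in \mathbb{C}_x$, so $\gamma(x) = \lambda$ and $\gamma(y) = 0$. The displayed formulas for $d_\lambda^0, d_\lambda^1$ in Subsection \ref{subsecSTE} and for $l_\lambda^0, l_\lambda^1$ above show on inspection that $d_\lambda^0 = -q\, l_\lambda^0$ and $d_\lambda^1 = -l_\lambda^1$. Since multiplication by a nonzero scalar on each term of a Banach complex preserves both kernels and images, this gives an isomorphism of complexes, so $\mathcal{K}((T,S),\lambda)$ is exact if and only if $\mathcal{L}_\lambda(T,S)$ is exact. That immediately yields $\sigma_x(T,S) = \sigma_l(T,S) \times \{0\}$. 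For the $y$-stratum I would take $\mu \in \mathbb{C}_y$ (so $\gamma(x)=0$, $\gamma(y)=\mu$) and compare $\mathcal{K}((T,S),\mu)$ with $\mathcal{R}_\mu(T,S)$. Here the comparison is not a diagonal rescaling but a swap of the two $X$-summands in the middle: letting $\tau(a,b) = (b,a)$, one checks $\tau \circ d_\mu^0 = r_\mu^0$ and $r_\mu^1 \circ \tau = -d_\mu^1$, which again gives an isomorphism of complexes (up to a sign on the last term). Consequently $\sigma_y(T,S) = \{0\} \times \sigma_r(T,S)$.

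Once the two equalities are established, the rest is immediate. Lemma \ref{lTSsp} applied to the pseudo-commuting pair $(T,S)$ with $C = q^{-1}T$ gives $\sigma_l(T,S) \subseteq \sigma(T) \cup \sigma(q^{-1}T)$, and applied to the flipped pair $(S,T)$ with $C = qS$ it gives $\sigma_r(T,S) = \sigma_l(S,T) \subseteq \sigma(S) \cup \sigma(qS)$. Taking the union of the two strata then yields the asserted $q$-projection containment. Compactness of each stratum follows because both $\sigma_l$ and $\sigma_r$ are bounded by compact spectra of single operators, and each is also closed since the set of parameters at which a continuously parametrized Banach complex is exact is open (a standard stability property, cited via \cite{DosAJM} in the paper).

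There is no real obstacle here: the only thing to be careful about is the sign/swap bookkeeping in the identification of differentials, which is where a naive comparison can easily go wrong because $d_\gamma^0$ and $d_\gamma^1$ in $\mathcal{K}((T,S),\gamma)$ were written so as to serve both strata simultaneously, while $l_\lambda^{\bullet}$ and $r_\mu^{\bullet}$ are written in their natural "pseudo-commuting" form. Once the sign conventions are pinned down, the theorem follows with no further work.
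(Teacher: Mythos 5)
Your proposal is correct and follows essentially the same route as the paper: the identifications $d_\lambda^0 = -q\,l_\lambda^0$, $d_\lambda^1 = -l_\lambda^1$ on $\mathbb{C}_x$ and the coordinate-swap comparison with $\mathcal{R}_\mu(T,S)$ on $\mathbb{C}_y$ match the commutative diagrams used in the paper, and the $q$-projection and compactness arguments invoke Lemma \ref{lTSsp} together with the standard openness/stability of exactness for continuously parametrized Banach complexes, exactly as the paper does (the paper cites \cite[Theorem 2.1]{Tay} where you cite \cite{DosAJM}, but these are the same stability fact). No gaps.
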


\begin{proof}
Take $\lambda\in\mathbb{C}_{x}$. As we have seen above, the complex
(\ref{Xkos}) is reduced to%
\begin{align*}
\mathcal{K}\left(  \left(  T,S\right)  ,\lambda\right)   &  :0\rightarrow
X\overset{d_{\lambda}^{0}}{\longrightarrow}X\oplus X\overset{d_{\lambda}%
^{1}}{\longrightarrow}X\rightarrow0,\\
d_{\lambda}^{0}  &  =\left[
\begin{tabular}
[c]{l}%
$-qS$\\
$T-q\lambda$%
\end{tabular}
\ \ \right]  \quad\text{and}\quad d_{\lambda}^{1}=\left[
\begin{tabular}
[c]{ll}%
$T-\lambda$ & $S$%
\end{tabular}
\ \ \right]  .
\end{align*}
Notice that $d_{\lambda}^{0}=-ql_{\lambda}^{0}$ and $d_{\lambda}%
^{1}=-l_{\lambda}^{1}$. Then the following commutative diagram%
\[%
\begin{tabular}
[c]{lllllll}%
$\mathcal{K}\left(  \left(  T,S\right)  ,\lambda\right)  :0\rightarrow$ & $X$
& $\overset{d_{\lambda}^{0}=\left[
\begin{tabular}
[c]{l}%
$-qS$\\
$T-q\lambda$%
\end{tabular}
\ \right]  }{\longrightarrow}$ & $X\oplus X$ & $\overset{d_{\lambda}%
^{1}=\left[
\begin{tabular}
[c]{ll}%
$T-\lambda$ & $S$%
\end{tabular}
\ \right]  }{\longrightarrow}$ & $X$ & $\rightarrow0$\\
& $\downarrow q$ &  & $\downarrow-\left(  1\oplus1\right)  $ &  &
$\downarrow1$ & \\
$\mathcal{L}_{\lambda}\left(  T,S\right)  :0\rightarrow$ & $X$ &
$\overset{l_{\lambda}^{0}=\left[
\begin{tabular}
[c]{l}%
$S$\\
$\lambda-q^{-1}T$%
\end{tabular}
\ \ \right]  }{\longrightarrow}$ & $X\oplus X$ & $\overset{l_{\lambda}%
^{1}=\left[
\begin{tabular}
[c]{ll}%
$\lambda-T$ & $-S$%
\end{tabular}
\ \ \right]  }{\longrightarrow}$ & $X$ & $\rightarrow0$%
\end{tabular}
\ \ \ \ \ \
\]
implements an isomorphic identification $\mathcal{K}\left(  \left(
T,S\right)  ,\lambda\right)  =\mathcal{L}_{\lambda}\left(  T,S\right)  $ of
the Banach space complexes. It follows that $\sigma_{x}\left(  T,S\right)
=\sigma_{l}\left(  T,S\right)  \times\left\{  0\right\}  $.

If $\mu\in\mathbb{C}_{y}$ then the complex (\ref{Xkos}) is converted to the
following one
\begin{align*}
\mathcal{K}\left(  \left(  T,S\right)  ,\mu\right)   &  :0\rightarrow
X\overset{d_{\mu}^{0}}{\longrightarrow}X\oplus X\overset{d_{\mu}%
^{1}}{\longrightarrow}X\rightarrow0,\\
d_{\mu}^{0}  &  =\left[
\begin{tabular}
[c]{l}%
$\mu-qS$\\
$T$%
\end{tabular}
\ \ \ \right]  \quad\text{and}\quad d_{\mu}^{1}=\left[
\begin{tabular}
[c]{ll}%
$T$ & $S-\mu$%
\end{tabular}
\ \ \ \right]  .
\end{align*}
As above, the following commutative diagram
\[%
\begin{tabular}
[c]{lllllll}%
$\mathcal{K}\left(  \left(  T,S\right)  ,\mu\right)  :0\rightarrow$ & $X$ &
$\overset{d_{\mu}^{0}=\left[
\begin{tabular}
[c]{l}%
$\mu-qS$\\
$T$%
\end{tabular}
\ \right]  }{\longrightarrow}$ & $X\oplus X$ & $\overset{d_{\mu}^{1}=\left[
\begin{tabular}
[c]{ll}%
$T$ & $S-\mu$%
\end{tabular}
\ \right]  }{\longrightarrow}$ & $X$ & $\rightarrow0$\\
& $\downarrow_{1}$ &  & $\downarrow_{\left[
\begin{tabular}
[c]{ll}%
$0$ & $1$\\
$1$ & $0$%
\end{tabular}
\ \right]  }$ &  & $\downarrow_{-1}$ & \\
$\mathcal{R}_{\mu}\left(  T,S\right)  :0\rightarrow$ & $X$ & $\overset{r_{\mu
}^{0}=\left[
\begin{tabular}
[c]{l}%
$T$\\
$\mu-qS$%
\end{tabular}
\ \right]  }{\longrightarrow}$ & $X\oplus X$ & $\overset{r_{\mu}^{1}=\left[
\begin{tabular}
[c]{ll}%
$\mu-S$ & $-T$%
\end{tabular}
\ \right]  }{\longrightarrow}$ & $X$ & $\rightarrow0$%
\end{tabular}
\ \ \ \
\]
implements an isomorphic identification $\mathcal{K}\left(  \left(
T,S\right)  ,\mu\right)  =\mathcal{R}_{\mu}\left(  S,T\right)  $ of the Banach
space complexes. Hence $\sigma_{y}\left(  T,S\right)  =\left\{  0\right\}
\times\sigma_{r}\left(  T,S\right)  $.

Finally, note that $\mathcal{L}_{\lambda}\left(  T,S\right)  $, $\lambda
\in\mathbb{C}_{x}$, and $\mathcal{R}_{\mu}\left(  T,S\right)  $, $\mu
\in\mathbb{C}_{y}$ are continuously parametrized Banach space complexes over
$\mathbb{C}$. Using \cite[Theorem 2.1]{Tay}, we deduce that $\sigma_{x}\left(
T,S\right)  \subseteq\mathbb{C}_{x}$ and $\sigma_{y}\left(  T,S\right)
\subseteq\mathbb{C}_{y}$ are closed subsets in the standard topology of the
complex plane $\mathbb{C}$. By Lemma \ref{lTSsp}, we obtain that
\begin{align*}
\sigma_{x}\left(  T,S\right)   &  =\sigma_{l}\left(  T,S\right)
\times\left\{  0\right\}  \subseteq\left(  \sigma\left(  T\right)  \cup
\sigma\left(  q^{-1}T\right)  \right)  \times\left\{  0\right\}  ,\\
\sigma_{y}\left(  T,S\right)   &  =\left\{  0\right\}  \times\sigma_{r}\left(
T,S\right)  =\left\{  0\right\}  \times\sigma_{r}\left(  S,T\right)
\subseteq\left\{  0\right\}  \times\left(  \sigma\left(  S\right)  \cup
\sigma\left(  qS\right)  \right)  .
\end{align*}
It follows that
\[
\sigma\left(  T,S\right)  \subseteq\left(  \left(  \sigma\left(  T\right)
\cup\sigma\left(  q^{-1}T\right)  \right)  \times\left\{  0\right\}  \right)
\cup\left(  \left\{  0\right\}  \times\left(  \sigma\left(  S\right)
\cup\sigma\left(  qS\right)  \right)  \right)  ,
\]
that is, $\sigma\left(  T,S\right)  $ is bounded. Consequently, $\sigma
_{x}\left(  T,S\right)  $ and $\sigma_{y}\left(  T,S\right)  $ are compact
subsets. In particular, so is $\sigma\left(  T,S\right)  $ in $\mathbb{C}^{2}$.
\end{proof}

The nonvoidness of the joint spectrum follows from the following assertion.

\begin{proposition}
\label{propNVs}Let $X$ be a left Banach $\mathfrak{A}_{q}$-module given by an
operator pair $\left(  T,S\right)  $ with $TS=q^{-1}ST$. If $T$ and $S$ are
invertible operators, then $\sigma\left(  T,S\right)  =\varnothing$ and
$\left\vert q\right\vert =1$. If $\left\vert q\right\vert \neq1$ then
$\sigma\left(  T,S\right)  $ is a nonempty compact subset in $\mathbb{C}^{2}$.
\end{proposition}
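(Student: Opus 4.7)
The plan is to split the statement into the ``invertible case'' (where $\sigma(T,S)=\emptyset$ and $|q|=1$) and the ``nonemptiness case'' (for $|q|\neq 1$). The invertible case rests on Lemma~\ref{lTSsp} and Theorem~\ref{lemSpLR}, while the nonemptiness rests on the quasinilpotence of $xy$ inside $\operatorname{Rad}\mathcal{O}_q(\mathbb{C}^2)$ (valid for $|q|\neq 1$).

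For the invertible case I apply Lemma~\ref{lTSsp} twice. The pair $(T,S)$ is pseudo-commuting with $C=q^{-1}T$; invertibility of $S$ yields $\sigma_l(T,S)=\emptyset$. The flipped pair $(S,T)$ is pseudo-commuting with $C=qS$; invertibility of $T$ yields $\sigma_r(T,S)=\sigma_l(S,T)=\emptyset$. Theorem~\ref{lemSpLR} then gives $\sigma(T,S)=\emptyset$. For the equality $|q|=1$, the relation $TS=q^{-1}ST$ with $S$ invertible is the similarity $S^{-1}TS=q^{-1}T$, so $\sigma(T)=q^{n}\sigma(T)$ for every $n\in\mathbb{Z}$; since $T$ is invertible, $\sigma(T)$ is a nonempty compact subset of $\mathbb{C}^{*}$, and its invariance under the free $\mathbb{Z}$-action $\lambda\mapsto q^{n}\lambda$ forces $|q|=1$.

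For the nonemptiness part I argue by contradiction, assuming $|q|\neq 1$ and $\sigma(T,S)=\emptyset$. The decisive input from Section~\ref{sAME} is $xy\in\mathcal{I}_{xy}=\operatorname{Rad}\mathcal{O}_q(\mathbb{C}^2)$, so $xy$ is quasinilpotent in the Arens--Michael envelope. Since the representation $\rho\colon\mathcal{O}_q(\mathbb{C}^2)\to\mathcal{B}(X)$ factors through one of the Banach algebras defining the inverse-limit structure, $TS=\rho(xy)$ is quasinilpotent in $\mathcal{B}(X)$, hence non-invertible, and so $T$ and $S$ cannot both be invertible. If $\ker S\neq 0$, the relation $ST=qTS$ makes $\ker S$ invariant under $T$; the operator $T|_{\ker S}$ then has nonempty approximate point spectrum, and for $\mu$ in this spectrum with unit approximate eigenvectors $\xi_n\in\ker S$ satisfying $(T-\mu)\xi_n\to 0$, the differential $d_{\gamma}^{0}$ at $\gamma=(\mu/q,0)\in\mathbb{C}_{x}$ sends $\xi_n$ to $(-qS\xi_n,(T-\mu)\xi_n)\to(0,0)$, so $d_\gamma^{0}$ fails to be bounded below on $X$; this places $\gamma$ in $\sigma^{\pi,0}(T,S)\subseteq\sigma(T,S)$, a contradiction. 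A symmetric argument using that $\ker T$ is $S$-invariant (from $TS=q^{-1}ST$) handles the case $\ker T\neq 0$.

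The main obstacle is the residual case in which both $T$ and $S$ are injective while $TS$ is quasinilpotent. Here I propose to pass to the Banach-space adjoints: from $(TS)^{*}=S^{*}T^{*}$ and $TS=q^{-1}ST$ one obtains $S^{*}T^{*}=q^{-1}T^{*}S^{*}$, so the dual pair $(S^{*},T^{*})$ endows $X^{*}$ with a left Banach $\mathfrak{A}_q$-module structure whose Koszul-type complex is Banach-space dual to $\mathcal{K}((T,S),\gamma)$, and exactness of one is equivalent to exactness of the other. Non-invertibility of $TS$ together with injectivity of $T$ and $S$ rules out both being invertible, so at least one has proper range; when this range is also closed, the corresponding dual kernel $\ker T^{*}$ or $\ker S^{*}$ is nontrivial and the kernel construction of the previous paragraph, applied to the dual pair, transfers via duality back to a point of $\sigma(T,S)$. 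The delicate subcase is dense-but-not-closed range, where the dual kernel vanishes and one must extract an approximate joint eigenvector directly, leveraging the full parametric exactness of $\mathcal{L}_\lambda$ and $\mathcal{R}_\mu$ over the complex plane; this subcase is what I expect to be the technically hardest step.
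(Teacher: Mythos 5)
Your argument for the invertible case is correct and parallel to the paper's: you invoke Lemma~\ref{lTSsp} twice (once for $(T,S)$ with $C=q^{-1}T$, once for the flipped pair $(S,T)$ with $C=qS$) and Theorem~\ref{lemSpLR} to conclude $\sigma(T,S)=\varnothing$. For $|q|=1$ you use $S^{-1}TS=q^{-1}T$ to get $\sigma(T)=q^{-n}\sigma(T)$; the paper instead argues from $\sigma(TS)\cup\{0\}=q\sigma(TS)\cup\{0\}$ that $\sigma(TS)=\{0\}$ when $|q|\neq 1$, so $TS$ is never invertible. Both are valid, and indeed the paper's version is more economical because it also sets up the nonemptiness argument: it immediately gives $0\in\sigma(T)\cup\sigma(S)$ without detouring through the radical $\mathcal{I}_{xy}$ and the quasinilpotence of $xy$. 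Your kernel cases are fine as well (your $\ker S\neq 0$ case, placing a point in $\sigma_x$, mirrors the paper's $\ker T\neq 0$ case, which places a point in $\sigma_y$; both hinge on the invariance of the kernel and the nonemptiness of the approximate point spectrum of the restricted operator).

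The genuine gap is exactly where you flag it: the residual case with $T$ and $S$ both injective but at least one non-surjective. Your duality strategy only closes the case of \emph{closed} non-dense range; for dense-but-non-closed range you have no argument, and the adjoint picture genuinely loses information there ($\ker T^{*}=\{0\}$). The paper circumvents this entirely by applying the Slodkowski Lemma \cite[Lemma 1.6]{Muller} to the commutative square
\[
\begin{array}{ccc}
X & \overset{T}{\longrightarrow} & X\\
\downarrow qS & & \downarrow S\\
X & \overset{T}{\longrightarrow} & X
\end{array}
\]
(using $ST=qTS$): if $\ker T=\{0\}$ but $0\in\sigma(T)$, so $TX\neq X$, the lemma produces $\mu$ with $TX+(\mu-S)X\neq X$, hence $\operatorname{im}(r_{\mu}^{1})\neq X$ and $(0,\mu)\in\sigma_{y}(T,S)$; the case $SX\neq X$, $\ker S=\{0\}$ is handled symmetrically via $TS=q^{-1}ST$. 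You should replace your adjoint/duality detour with this Slodkowski-Lemma step; without it the proof does not close, precisely because non-surjectivity does not reduce to a kernel obstruction at the Banach-space level.
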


\begin{proof}
First notice that $\sigma\left(  TS\right)  \cup\left\{  0\right\}
=\sigma\left(  ST\right)  \cup\left\{  0\right\}  =q\sigma\left(  TS\right)
\cup\left\{  0\right\}  $, therefore $\sigma\left(  TS\right)  =\left\{
0\right\}  $ whenever $\left\vert q\right\vert \neq1$. It follows that $TS$ is
not invertible, therefore so is $T$ or $S$. Thus, if $T$ and $S$ are
invertible operators, then $\left\vert q\right\vert =1$. Moreover, $\sigma
_{l}\left(  T,S\right)  =\varnothing$ and $\sigma_{r}\left(  T,S\right)
=\sigma_{l}\left(  S,T\right)  =\varnothing$ thanks to Lemma \ref{lTSsp}.
Using Theorem \ref{lemSpLR}, we conclude that $\sigma\left(  T,S\right)
=\left(  \sigma_{l}\left(  T,S\right)  \times\left\{  0\right\}  \right)
\cup\left(  \left\{  0\right\}  \times\sigma_{r}\left(  T,S\right)  \right)
=\varnothing$.

Now assume that $\left\vert q\right\vert \neq1$. Then $0\in\sigma\left(
T\right)  \cup\sigma\left(  S\right)  $. First assume that $0\in\sigma\left(
T\right)  $. If $Y=\ker\left(  T\right)  \neq\left\{  0\right\}  $ then
$S\left(  Y\right)  \subseteq Y$, for $TS=q^{-1}ST$. The approximate point
spectrum $\sigma_{\operatorname{ap}}\left(  qS|Y\right)  $ of the operator
$qS|Y\in\mathcal{B}\left(  Y\right)  $ is non-empty. Take $\mu\in
\sigma_{\operatorname{ap}}\left(  qS|Y\right)  $. Then $\left(  \mu-qS\right)
\zeta_{n}\rightarrow0$, $n\rightarrow\infty$ for a certain sequence $\left\{
\zeta_{n}\right\}  $ of unit vectors in $Y$. It follows that
\[
r_{\mu}^{0}\left(  \zeta_{n}\right)  =\left[
\begin{tabular}
[c]{l}%
$T$\\
$\mu-qS$%
\end{tabular}
\ \ \right]  \zeta_{n}=\left(  T\zeta_{n},\left(  \mu-qS\right)  \zeta
_{n}\right)  =\left(  0,\left(  \mu-qS\right)  \zeta_{n}\right)
\rightarrow0\text{,}%
\]
where $r_{\mu}^{0}$ is the differential of the complex $\mathcal{R}_{\mu
}\left(  T,S\right)  $. Thus either $\ker\left(  r_{\mu}^{0}\right)
\neq\left\{  0\right\}  $ or $\operatorname{im}\left(  r_{\mu}^{0}\right)  $
is not closed. Consequently, $\mu\in\sigma_{r}\left(  T,S\right)  $, $\left(
0,\mu\right)  \in\sigma_{y}\left(  T,S\right)  $ and $\sigma_{y}\left(
T,S\right)  \subseteq\sigma\left(  T,S\right)  $ by virtue of Theorem
\ref{lemSpLR}. If $\ker\left(  T\right)  =\left\{  0\right\}  $ then $TX\neq
X$. Consider the following commutative diagram
\[%
\begin{tabular}
[c]{lll}%
$X$ & $\overset{T}{\longrightarrow}$ & $X$\\
$\downarrow^{qS}$ &  & $\downarrow^{S}$\\
$X$ & $\overset{T}{\longrightarrow}$ & $X$%
\end{tabular}
\
\]
($ST=qTS$). By Slodkowski Lemma \cite[Lemma 1.6]{Muller}, we have $TX+\left(
\mu-S\right)  X\neq X$ for a certain $\mu\in\mathbb{C}$. In particular,
$\mu\in\sigma\left(  S\right)  $ and $\operatorname{im}\left(  \left[
\begin{tabular}
[c]{ll}%
$\mu-S$ & $-T$%
\end{tabular}
\ \right]  \right)  \neq X$, that is, $\operatorname{im}\left(  r_{\mu}%
^{1}\right)  \neq X$. Consequently, $\left(  0,\mu\right)  \in\sigma
_{y}\left(  T,S\right)  $ by Theorem \ref{lemSpLR}.

A similar argument is applicable to the case of $0\in\sigma\left(  S\right)
$. Namely, if $\ker\left(  S\right)  \neq\left\{  0\right\}  $ then as above
$\operatorname{im}\left(  l_{\lambda}^{0}\right)  $ is not closed for a
certain $\lambda\in\sigma_{\operatorname{ap}}\left(  q^{-1}T|\ker\left(
S\right)  \right)  $, where $l_{\lambda}^{0}$ is the differential of the
complex $\mathcal{L}_{\lambda}\left(  T,S\right)  $. Therefore $\left(
\lambda,0\right)  \in\sigma_{x}\left(  T,S\right)  $ and $\sigma_{x}\left(
T,S\right)  \subseteq\sigma\left(  T,S\right)  $ by Theorem \ref{lemSpLR}. If
$SX\neq X$ then we consider the following commutative diagram
\[%
\begin{tabular}
[c]{lll}%
$X$ & $\overset{S}{\longrightarrow}$ & $X$\\
$\downarrow^{q^{-1}T}$ &  & $\downarrow^{T}$\\
$X$ & $\overset{S}{\longrightarrow}$ & $X$%
\end{tabular}
\ \ \
\]
($TS=q^{-1}ST$). Again by Slodkowski Lemma \cite[Lemma 1.6]{Muller}, we deduce
that $\left(  \lambda-T\right)  X+SX\neq X$ for a certain $\lambda\in
\sigma\left(  T\right)  $. It follows that $\operatorname{im}\left(
l_{\lambda}^{1}\right)  \neq X$, that is, $\left(  \lambda,0\right)  \in
\sigma_{x}\left(  T,S\right)  $. Whence $\sigma\left(  T,S\right)
\neq\varnothing$ whenever $\left\vert q\right\vert \neq1$.
\end{proof}

\begin{remark}
As we can see from the proof of Proposition \ref{propNVs} that $\sigma
_{y}\left(  T,S\right)  \neq\varnothing$ whenever $0\in\sigma\left(  T\right)
$, whereas $0\in\sigma\left(  S\right)  $ implies that $\sigma_{x}\left(
T,S\right)  \neq\varnothing$.
\end{remark}

\section{The concrete example and lack of the projection property}

Let $X$ be a left Banach $\mathfrak{A}_{q}$-module given by an operator pair
$\left(  T,S\right)  $ with $TS=q^{-1}ST$ and $\left\vert q\right\vert <1$. By
Proposition \ref{propNVs}, the joint spectrum $\sigma\left(  T,S\right)  $ is
a nonempty compact subset in $\mathbb{C}^{2}$. By Theorem \ref{lemSpLR}, the
inclusion $\sigma\left(  T,S\right)  |\mathbb{C}_{x}\subseteq\left(
\sigma\left(  T\right)  \cup\sigma\left(  q^{-1}T\right)  \right)
\times\left\{  0\right\}  $ (or $\sigma_{l}\left(  T,S\right)  \subseteq
\sigma\left(  T\right)  \cup\sigma\left(  q^{-1}T\right)  $) holds. The
forward projection property for the joint spectrum would state the inclusion
$\sigma\left(  T,S\right)  |\mathbb{C}_{x}\subseteq\sigma\left(  T\right)
\times\left\{  0\right\}  $ rather than what we have seen above. In a similar
way $\left\{  0\right\}  \times\sigma\left(  S\right)  \subseteq\sigma\left(
T,S\right)  |\mathbb{C}_{y}$ would be our expectation for the backward
projection property. Our present task is to show that none of these inclusions
valid for the joint spectrum of a contractive Banach quantum plane.

\subsection{The left Banach $\mathfrak{A}_{q}$-modules $\ell_{p}$}

There are key examples of the operator $q$-planes considered on the $L_{p}%
$-Banach spaces. To be certain, we assume that $X=\ell_{p}$ (or $\ell
_{p}\left(  \mathbb{Z}_{+}\right)  $) for $1\leq p<\infty$ with its standard
topological basis $\left\{  e_{n}\right\}  _{n\in\mathbb{Z}_{+}}$, and
consider the unilateral shift operator $T$ and the diagonal $q$-operator $S$
on $\ell_{p}$. Thus $T,S\in\mathcal{B}\left(  \ell_{p}\right)  $ are the
operators acting by the rules%
\begin{align*}
T\left(  e_{n}\right)   &  =e_{n+1},\\
S\left(  e_{n}\right)   &  =q^{n}e_{n},
\end{align*}
for all $n\in\mathbb{Z}_{+}$. One can easily verify that $TS=q^{-1}ST$. Since
$\left\vert q\right\vert <1$, it follows that $S$ is a compact operator with
its spectrum $\sigma\left(  S\right)  =\left\{  1\right\}  _{q}=\left\{
q^{n}:n\in\mathbb{Z}_{+}\right\}  \cup\left\{  0\right\}  $, which is the
$q$-hull of $\left\{  1\right\}  $. Moreover, $\sigma\left(  T\right)
=\mathbb{D}_{1}$ is the closed unit disk. Thus both $\sigma\left(  T\right)  $
and $\sigma\left(  S\right)  $ are the $q$-compact (coincide with their
$q$-hulls) subsets of the complex plane.

\subsection{The left spectrum}

First we focus on the left spectrum $\sigma_{l}\left(  T,S\right)  $ or
$\sigma_{x}\left(  T,S\right)  $ to be described. Since $0\in\sigma\left(
S\right)  $, it follows as in the proof of Proposition \ref{propNVs} that
$\left(  \lambda-T\right)  X+SX\neq X$ for a certain $\lambda\in\sigma\left(
T\right)  $ (the Slodkowski Lemma). Hence the second cohomology group
$H^{2}\left(  \lambda\right)  =\ell_{p}/\operatorname{im}\left(  l_{\lambda
}^{1}\right)  $ of the complex $\mathcal{L}_{\lambda}\left(  T,S\right)  $ is
not trivial, therefore $\left(  \lambda,0\right)  \in\sigma\left(  T,S\right)
$ or $\lambda\in\sigma_{l}\left(  T,S\right)  $. Hence $\left(  \sigma\left(
T\right)  \times\left\{  0\right\}  \right)  \cap\left(  \sigma\left(
T,S\right)  |\mathbb{C}_{x}\right)  \neq\varnothing$ or $\sigma\left(
T\right)  \cap\sigma_{l}\left(  T,S\right)  \neq\varnothing$. Since
$\ker\left(  S\right)  =\left\{  0\right\}  $, we conclude that $H^{0}\left(
\lambda\right)  =\ker\left(  l_{\lambda}^{0}\right)  =\ker\left(  S\right)
\cap\ker\left(  \lambda-q^{-1}T\right)  =\left\{  0\right\}  $ for all
$\lambda\in\sigma_{l}\left(  T,S\right)  $.

\begin{proposition}
\label{propEx1}If $1<\left\vert \lambda\right\vert \leq\left\vert q\right\vert
^{-1}$ then $H^{0}\left(  \lambda\right)  =H^{2}\left(  \lambda\right)
=\left\{  0\right\}  $ and $H^{1}\left(  \lambda\right)  \neq\left\{
0\right\}  $. Thus $\mathbb{D}_{\left\vert q\right\vert ^{-1}}\backslash
\mathbb{D}_{1}\subseteq\sigma_{l}\left(  T,S\right)  \subseteq\mathbb{D}%
_{\left\vert q\right\vert ^{-1}}$ and $\left(  0,0\right)  \notin\sigma\left(
T,S\right)  $.
\end{proposition}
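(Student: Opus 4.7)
The plan is to examine the three cohomology groups of $\mathcal{L}_{\lambda}(T,S)$ separately, leveraging the intertwining $(\lambda-T)S=S(\lambda-q^{-1}T)$ that follows from $TS=q^{-1}ST$. The first two vanishings are immediate: since $Se_{n}=q^{n}e_{n}$ with $q\neq 0$, the operator $S$ is injective, so $H^{0}(\lambda)=\ker(S)\cap\ker(\lambda-q^{-1}T)=\{0\}$ for all $\lambda$; and the hypothesis $|\lambda|>1$ places $\lambda$ outside $\sigma(T)=\mathbb{D}_{1}$, making $\lambda-T$ invertible in $\mathcal{B}(\ell_{p})$, so $\operatorname{im}(l_{\lambda}^{1})\supseteq\operatorname{im}(\lambda-T)=\ell_{p}$ and $H^{2}(\lambda)=\{0\}$.

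The main step is the non-vanishing of $H^{1}(\lambda)$. Using the invertibility of $\lambda-T$, I would identify $\ker(l_{\lambda}^{1})$ with $\ell_{p}$ via the isomorphism $\eta\mapsto ((\lambda-T)^{-1}S\eta,\eta)$. Under this identification, the intertwining relation transports $\operatorname{im}(l_{\lambda}^{0})=\{(S\theta,(\lambda-q^{-1}T)\theta):\theta\in\ell_{p}\}$ onto the subspace $\operatorname{im}(\lambda-q^{-1}T)\subseteq\ell_{p}$, yielding $H^{1}(\lambda)\cong\ell_{p}/\operatorname{im}(\lambda-q^{-1}T)$. Writing $\lambda-q^{-1}T=q^{-1}(q\lambda-T)$, the hypothesis $|\lambda|\leq|q|^{-1}$ gives $|q\lambda|\leq 1$, so the standard calculation for the unilateral shift applies: the formal solution of $(q\lambda-T)\xi=e_{0}$ has coordinates $\xi_{n}=(q\lambda)^{-n-1}$ with $|\xi_{n}|\geq 1$, hence $\xi\notin\ell_{p}$. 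This shows $e_{0}\notin\operatorname{im}(\lambda-q^{-1}T)$, so $H^{1}(\lambda)\neq\{0\}$.

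The homology facts immediately yield the inclusions $\mathbb{D}_{|q|^{-1}}\setminus\mathbb{D}_{1}\subseteq\sigma_{l}(T,S)\subseteq\mathbb{D}_{|q|^{-1}}$, the right one being Lemma \ref{lTSsp} combined with $\sigma(T)\cup\sigma(q^{-1}T)=\mathbb{D}_{|q|^{-1}}$. For the last claim $(0,0)\notin\sigma(T,S)$, the argument above breaks down since $0\in\sigma(T)$, so $\lambda-T$ is no longer invertible; I would instead verify exactness of $\mathcal{L}_{0}(T,S)$ directly. Here $H^{0}(0)=\{0\}$ by injectivity of $S$; $H^{2}(0)=\{0\}$ because $\operatorname{im}(T)=\{\xi\in\ell_{p}:\xi_{0}=0\}$ and $e_{0}=Se_{0}\in\operatorname{im}(S)$ exhaust $\ell_{p}$; and $H^{1}(0)=\{0\}$ via an explicit coordinate lift: given $(\xi,\eta)\in\ker(l_{0}^{1})$, the relation $T\xi=-S\eta$ forces $\eta_{0}=0$ and $\eta_{n}=-\xi_{n-1}/q^{n}$, so $\theta_{n}:=\xi_{n}/q^{n}=-q\eta_{n+1}$ defines an element of $\ell_{p}$ (of norm at most $|q|\,\|\eta\|_{p}$) with $l_{0}^{0}\theta=(\xi,\eta)$. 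I expect the main obstacle to be the identification $H^{1}(\lambda)\cong\ell_{p}/\operatorname{im}(\lambda-q^{-1}T)$, which requires tracking the twisted commutation carefully through both differentials of the complex; once it is in place, the range description of the shift and the coordinate lift at $\lambda=0$ are routine.
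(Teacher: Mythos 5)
Your proof is correct and follows the same logical skeleton as the paper's: use injectivity of $S$ for $H^0$, invertibility of $\lambda-T$ for $H^2$, a reduction of the middle cohomology to a one-operator range condition, and a direct lift at the origin. The only genuine difference is in how the nonvanishing of $H^1(\lambda)$ is organized. The paper exhibits a concrete cocycle $(\zeta,\eta)=(\sum_n \lambda^{-n-1}e_n,\,e_0)$ and shows $\zeta\notin\operatorname{im}(S)$ (which suffices since $\ker S=\{0\}$ forces the lift $\theta$ to be unique). You instead push the isomorphism $\ker(l_\lambda^1)\cong\ell_p$, $(\xi,\eta)\mapsto\eta$, through $\operatorname{im}(l_\lambda^0)$ using the intertwining $(\lambda-T)S=S(\lambda-q^{-1}T)$ to obtain the cleaner identification $H^1(\lambda)\cong\ell_p/\operatorname{im}(q\lambda-T)$, and then invoke that the shifted unilateral shift $q\lambda-T$ is not onto for $|q\lambda|\le1$. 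These are dual descriptions of the same obstruction (your $e_0\notin\operatorname{im}(q\lambda-T)$ corresponds, via the paper's coordinates, exactly to $\zeta=(\lambda-T)^{-1}e_0\notin\operatorname{im}(S)$); yours gives an explicit quotient description of $H^1(\lambda)$ on the whole annulus, which is a small bonus, while the paper's is more self-contained. Your treatment of $(0,0)\notin\sigma(T,S)$ via exactness of $\mathcal{L}_0(T,S)$ is legitimate because Theorem \ref{lemSpLR} identifies $\mathcal{K}((T,S),(0,0))$ with $\mathcal{L}_0(T,S)$ up to scalar isomorphism, and your explicit lift $\theta_n=\xi_n/q^n=-q\eta_{n+1}$ with $\|\theta\|_p\le|q|\|\eta\|_p$ checks out.
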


\begin{proof}
Take $\lambda\in\mathbb{D}_{\left\vert q\right\vert ^{-1}}\backslash
\mathbb{D}_{1}$, and consider the Banach space complex $\mathcal{L}_{\lambda
}\left(  T,S\right)  :$
\[%
\begin{tabular}
[c]{lllllll}%
$0\rightarrow$ & $\ell_{p}$ & $\overset{l_{\lambda}^{0}=\left[
\begin{tabular}
[c]{l}%
$S$\\
$\lambda-q^{-1}T$%
\end{tabular}
\ \right]  }{\longrightarrow}$ & $\ell_{p}\oplus\ell_{p}$ &
$\overset{l_{\lambda}^{1}=\left[
\begin{tabular}
[c]{ll}%
$\lambda-T$ & $-S$%
\end{tabular}
\ \right]  }{\longrightarrow}$ & $\ell_{p}$ & $\rightarrow0.$%
\end{tabular}
\ \
\]
Since $\lambda\notin\sigma\left(  T\right)  $, it follows that
$\operatorname{im}\left(  l_{\lambda}^{1}\right)  =\operatorname{im}\left(
\lambda-T\right)  +\operatorname{im}\left(  S\right)  =\ell_{p}$, that is,
$H^{2}\left(  \lambda\right)  =\left\{  0\right\}  $.

To prove that $H^{1}\left(  \lambda\right)  \neq\left\{  0\right\}  $, let us
take the following vectors $\zeta=\sum_{n=0}^{\infty}\dfrac{1}{\lambda^{n+1}%
}e_{n}$ with $\left\Vert \zeta\right\Vert _{p}=\left(  \left\vert
\lambda\right\vert ^{p}-1\right)  ^{-1/p}$, and $\eta=e_{0}$ with $\left\Vert
\eta\right\Vert _{p}=1$ from the Banach space $\ell_{p}$. First prove that
$\left(  \zeta,\eta\right)  \notin\operatorname{im}\left(  l_{\lambda}%
^{0}\right)  $. Indeed, in the contrary case, we have $\left(  \zeta
,\eta\right)  =l_{\lambda}^{0}\left(  \theta\right)  $ for a certain
$\theta=\sum_{n=0}^{\infty}\alpha_{n}e_{n}\in\ell_{p}$, which means that
$\zeta=S\left(  \theta\right)  $ and $\left(  \lambda-q^{-1}T\right)  \left(
\theta\right)  =\eta$. It follows that $q^{n}\alpha_{n}=\dfrac{1}%
{\lambda^{n+1}}$ for all $n$. Then
\[
\lim\inf_{n}\left\{  \left\vert \alpha_{n}\right\vert \right\}  =\dfrac
{1}{\left\vert \lambda\right\vert }\lim\inf_{n}\left\{  \dfrac{1}{\left\vert
q\lambda\right\vert ^{n}}\right\}  \geq\dfrac{1}{\left\vert \lambda\right\vert
}>0,
\]
a contradiction ($\theta\in\ell_{p}$ with $\left\Vert \theta\right\Vert
_{p}^{p}=\sum_{n}\left\vert \alpha_{n}\right\vert ^{p}<\infty$). Notice also
that
\[
\eta=\left(  \lambda-q^{-1}T\right)  \left(  \theta\right)  =\lambda\alpha
_{0}e_{0}+\sum_{n=1}^{\infty}\left(  \lambda\alpha_{n}-q^{-1}\alpha
_{n-1}\right)  e_{n},
\]
which results in the same equalities $\alpha_{0}=\dfrac{1}{\lambda}$ and
$\alpha_{n}=\dfrac{1}{q^{n}\lambda^{n+1}}$, $n\in\mathbb{N}$. Further, let us
prove that $\left(  \zeta,\eta\right)  \in\ker\left(  l_{\lambda}^{1}\right)
$. Namely,
\[
l_{\lambda}^{1}\left(  \zeta,\eta\right)  =\left(  \lambda-T\right)
\zeta-S\eta=\sum_{n=0}^{\infty}\dfrac{1}{\lambda^{n}}e_{n}-\sum_{n=0}^{\infty
}\dfrac{1}{\lambda^{n+1}}e_{n+1}-e_{0}=0.
\]
Hence $H^{1}\left(  \lambda\right)  \neq\left\{  0\right\}  $. Thus
$\mathbb{D}_{\left\vert q\right\vert ^{-1}}\backslash\mathbb{D}_{1}%
\subseteq\sigma_{l}\left(  T,S\right)  $. By Theorem \ref{lemSpLR}, we derive
that
\[
\sigma_{l}\left(  T,S\right)  \subseteq\sigma\left(  T\right)  \cup
q^{-1}\sigma\left(  T\right)  =\mathbb{D}_{1}\cup q^{-1}\mathbb{D}%
_{1}=\mathbb{D}_{\left\vert q\right\vert ^{-1}}.
\]
Thus we obtain the inclusions $\mathbb{D}_{\left\vert q\right\vert ^{-1}%
}\backslash\mathbb{D}_{1}\subseteq\sigma_{l}\left(  T,S\right)  \subseteq
\mathbb{D}_{\left\vert q\right\vert ^{-1}}$.

Now prove that $\left(  0,0\right)  \notin\sigma\left(  T,S\right)  $, that
is, the complex (see (\ref{Xkos})) $\mathcal{K}\left(  \left(  T,S\right)
,0\right)  :$
\begin{align*}
0  &  \rightarrow\ell_{p}\overset{d^{0}}{\longrightarrow}\ell_{p}\oplus
\ell_{p}\overset{d^{1}}{\longrightarrow}\ell_{p}\rightarrow0\\
d^{0}  &  =\left[
\begin{tabular}
[c]{l}%
$-qS$\\
$T$%
\end{tabular}
\ \right]  ,\quad d^{1}=\left[
\begin{tabular}
[c]{ll}%
$T$ & $S$%
\end{tabular}
\ \right]
\end{align*}
is exact. Since $\ker(S)=\left\{  0\right\}  $, it follows that $\ker\left(
d^{0}\right)  =\left\{  0\right\}  $. For every $\zeta=\sum_{n=0}^{\infty
}\alpha_{n}e_{n}\in\ell_{p}$ we have $\zeta=S\left(  \alpha_{0}e_{0}\right)
+T\left(  \eta\right)  =d^{1}\left(  \eta,\alpha_{0}e_{0}\right)  $ with
$\eta=\sum_{n=0}^{\infty}\alpha_{n+1}e_{n}$, which means that
$\operatorname{im}\left(  d^{1}\right)  =\ell_{2}$. Finally, if $\left(
\zeta,\eta\right)  \in\ker\left(  d^{1}\right)  $ with $\zeta=\sum
_{n=0}^{\infty}\alpha_{n}e_{n}$ and $\ \eta=\sum_{n=0}^{\infty}\beta_{n}e_{n}%
$, then $T\zeta=-S\eta$, that is, $\beta_{0}=0$ and $\alpha_{n}=-q^{n+1}%
\beta_{n+1}$ for all $n\in\mathbb{Z}_{+}$. It remains to notice that
$d^{0}\theta=\left(  -qS\theta,T\theta\right)  =\left(  \zeta,\eta\right)  $
for $\theta=\sum_{n=0}^{\infty}\beta_{n+1}e_{n}\in\ell_{p}$. Whence
$\ker\left(  d^{1}\right)  =\operatorname{im}\left(  d^{0}\right)  $, thereby
the complex is exact.
\end{proof}

\begin{corollary}
\label{corRSQ}The inclusion $\sigma_{r}\left(  T,S\right)  \subseteq\left\{
q^{m},q^{m-1},\ldots,q,1\right\}  $ holds for some $m\geq0$.
\end{corollary}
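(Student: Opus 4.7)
My plan is to combine the general inclusion from Lemma~\ref{lTSsp} with two observations already embedded in the preceding results. Since the flipped pair $(S,T)$ is pseudo-commuting via $ST = T(qS)$, Lemma~\ref{lTSsp} yields
\[
\sigma_r(T,S) = \sigma_l(S,T) \subseteq \sigma(S) \cup \sigma(qS).
\]
Using $|q| < 1$ together with the fact that $S$ is compact with $\sigma(S) = \{q^n : n \geq 0\} \cup \{0\}$ and $\sigma(qS) = q\,\sigma(S) = \{q^n : n \geq 1\} \cup \{0\}$, this places $\sigma_r(T,S)$ inside the countable set $\{q^n : n \geq 0\} \cup \{0\}$.

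Next I would rule out the origin using the final assertion of Proposition~\ref{propEx1}, which states that $(0,0) \notin \sigma(T,S)$. Since $(0,0) \in \mathbb{C}_y$, the identification $\sigma_y(T,S) = \{0\} \times \sigma_r(T,S)$ from Theorem~\ref{lemSpLR} gives immediately $0 \notin \sigma_r(T,S)$, so the inclusion improves to
\[
\sigma_r(T,S) \subseteq \{q^n : n \geq 0\}.
\]

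Finally, I would close the argument topologically. By Theorem~\ref{lemSpLR}, $\sigma_r(T,S)$ is closed and bounded in $\mathbb{C}$, hence compact. Inside $\{q^n : n \geq 0\} \cup \{0\}$ the only accumulation point is $0$, which has just been excluded; therefore the compact set $\sigma_r(T,S)$ can have no accumulation point in $\mathbb{C}$ at all, and so must be finite. Letting $m$ be the largest $n$ with $q^n \in \sigma_r(T,S)$ (or $m = 0$ if $\sigma_r(T,S) \subseteq \{1\}$) then yields
\[
\sigma_r(T,S) \subseteq \{q^m, q^{m-1}, \ldots, q, 1\}.
\]

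I do not anticipate any real obstacle: the heavy lifting (the pseudo-commutation inclusion, the exactness of $\mathcal{K}((T,S),0)$ inside Proposition~\ref{propEx1}, and the closedness of the joint spectrum in the standard topology of $\mathbb{C}$) has already been done in the preceding results. The corollary is essentially a repackaging, with the only extra ingredient being the elementary topological remark that a compact subset of a set whose sole accumulation point is not in it must be finite.
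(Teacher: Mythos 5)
Your proof is correct and follows essentially the same route as the paper: both use $(0,0)\notin\sigma(T,S)$ from Proposition \ref{propEx1} to exclude $0$ from $\sigma_r(T,S)$, the compactness/closedness of $\sigma_r(T,S)$ coming from Theorem \ref{lemSpLR} and Proposition \ref{propNVs}, and the inclusion $\sigma_r(T,S)\subseteq\sigma(S)\cup\sigma(qS)\subseteq\{1\}_q$ from Lemma \ref{lTSsp}, then conclude that a closed subset of $\{q^n:n\geq 0\}\cup\{0\}$ avoiding $0$ must be a finite initial segment of powers of $q$. The only cosmetic difference is that the paper phrases the last step by intersecting $\{1\}_q$ with the complement of a small ball $B(0,\varepsilon)$, while you phrase it via the absence of accumulation points; these are interchangeable.
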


\begin{proof}
By Proposition \ref{propEx1}, we have $\left(  0,0\right)  \notin\sigma\left(
T,S\right)  $. In particular, $0\notin\sigma_{r}\left(  T,S\right)  $. By
Proposition \ref{propNVs}, we deduce that $\sigma_{r}\left(  T,S\right)  \cap
B\left(  0,\varepsilon\right)  =\varnothing$ for some small $\varepsilon>0$.
Using Theorem \ref{lemSpLR}, the inclusions $\sigma_{r}\left(  S,T\right)
\subseteq\sigma\left(  S\right)  \cup\sigma\left(  qS\right)  \subseteq
\left\{  1\right\}  _{q}$ hold. It follows that
\[
\sigma_{r}\left(  S,T\right)  \subseteq\left\{  1\right\}  _{q}\cap\left(
\mathbb{C}\backslash B\left(  0,\varepsilon\right)  \right)  =\left\{
q^{m},q^{m-1},\ldots,q,1\right\}
\]
for some $m\geq0$.
\end{proof}

As follows from Proposition \ref{propEx1}, $\sigma_{l}\left(  T,S\right)  \cap
B\left(  0,\varepsilon\right)  =\varnothing$ holds too for a small disk
$B\left(  0,\varepsilon\right)  $, which is a $q$-open (coincides with its
$q$-hull) subset. The presence of a $q$-open subset $U\subseteq\mathbb{C}%
\backslash\sigma_{l}\left(  T,S\right)  $ is used to relate to the
transversality property of the left Banach $\mathfrak{A}_{q}$-module $\ell
_{p}$ (see \cite{DMedJM09}).

\subsection{The right spectrum}

The following assertion clarifies the statement of Corollary \ref{corRSQ}.

\begin{proposition}
\label{propEx2}The equality $\sigma_{r}\left(  T,S\right)  =\left\{
1\right\}  $ holds. In this case, $H^{0}\left(  1\right)  =\left\{  0\right\}
$ and $H^{1}\left(  1\right)  =H^{2}\left(  1\right)  =\mathbb{C}$, where
$H^{i}\left(  1\right)  $ are the cohomology groups of the complex
$\mathcal{R}_{1}\left(  T,S\right)  $. Thus $\mathcal{R}_{1}\left(
T,S\right)  $ is a nonexact Fredholm complex.
\end{proposition}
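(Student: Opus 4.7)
The plan is to combine Corollary \ref{corRSQ} with a direct coordinate computation on the complex $\mathcal{R}_{\mu}(T,S)$ for each remaining candidate $\mu = q^{k}$, $k=0,1,\ldots,m$. Corollary \ref{corRSQ} localizes $\sigma_{r}(T,S)\subseteq\{1,q,q^{2},\ldots,q^{m}\}$, so it suffices to exhibit (i) the claimed cohomology of $\mathcal{R}_{1}(T,S)$, thereby witnessing $1\in\sigma_{r}(T,S)$, and (ii) the exactness of $\mathcal{R}_{q^{k}}(T,S)$ for every $k\geq 1$.

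For $\mu=1$, I would expand all vectors in the standard basis $\{e_{n}\}_{n\in\mathbb{Z}_{+}}$ and analyze the three cohomology groups separately. The vanishing $H^{0}(1)=\{0\}$ is immediate from the injectivity of $T$. For $H^{2}(1)$, observe that $(1-S)e_{0}=0$ and $T\ell_{p}\subseteq\{c\in\ell_{p}:c_{0}=0\}$, so $\operatorname{im}(r_{1}^{1})\subseteq\{c_{0}=0\}$; conversely, any such $c$ is the image of $\sum_{n\geq 1}c_{n}(1-q^{n})^{-1}e_{n}$ under $(1-S)$, which lies in $\ell_{p}$ thanks to $|1-q^{n}|^{-1}\leq|1-q|^{-1}$. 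Thus $\operatorname{im}(r_{1}^{1})$ is the codimension-one hyperplane $\{c_{0}=0\}$, whence $H^{2}(1)\cong\mathbb{C}$ generated by $[e_{0}]$. For $H^{1}(1)$, a direct unwinding of $r_{1}^{1}(\zeta,\eta)=0$ gives the relations $(1-q^{n})\alpha_{n}=\beta_{n-1}$ for $n\geq 1$ and leaves $\alpha_{0}$ unconstrained, whereas every $(T\theta,(1-qS)\theta)\in\operatorname{im}(r_{1}^{0})$ satisfies the same relations but forces $\alpha_{0}=0$. Hence $H^{1}(1)=\mathbb{C}\cdot[(e_{0},0)]$.

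For $\mu=q^{k}$ with $k\geq 1$, I would prove the exactness of $\mathcal{R}_{q^{k}}(T,S)$. Injectivity of $T$ again yields $H^{0}(q^{k})=0$. For $H^{2}(q^{k})$, note that $(q^{k}-S)\ell_{p}=\{c\in\ell_{p}:c_{k}=0\}$, since $\inf_{n\neq k}|q^{k}-q^{n}|>0$ (the sequence $|q^{k}-q^{n}|\to|q|^{k}>0$ as $n\to\infty$ and is nonzero at every finite $n\neq k$); combined with $e_{k}=Te_{k-1}\in T\ell_{p}$, this yields $(q^{k}-S)\ell_{p}+T\ell_{p}=\ell_{p}$. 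For $H^{1}(q^{k})$, the kernel relation at the $e_{0}$-coordinate forces $(q^{k}-1)\alpha_{0}=0$, so $\alpha_{0}=0$, while at the $e_{k}$-coordinate it gives $\beta_{k-1}=0$ with $\alpha_{k}$ unconstrained. The image side reproduces precisely the same constraints, with $\alpha_{k}$ produced as the free parameter $\theta_{k-1}$; the explicit preimage $\theta_{n}=\beta_{n}/(q^{k}-q^{n+1})$ for $n+1\neq k$ lies in $\ell_{p}$ by the same uniform lower bound on $|q^{k}-q^{n+1}|$. Therefore $\ker(r_{q^{k}}^{1})=\operatorname{im}(r_{q^{k}}^{0})$, and all three cohomology groups vanish.

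Assembling the two cases gives $\sigma_{r}(T,S)=\{1\}$ together with the announced cohomology of $\mathcal{R}_{1}(T,S)$, and the Fredholm property follows at once from $\dim H^{i}(1)\leq 1$ for $i=0,1,2$. The main bookkeeping obstacle is the vanishing $H^{1}(q^{k})=0$ for $k\geq 1$: one must handle the two distinct exceptional indices ($n=0$ forcing $\alpha_{0}=0$, and $n=k$ producing the free parameter $\alpha_{k}$) simultaneously in the kernel and image, and verify that the recursive preimage $\theta$ is in $\ell_{p}$. This step rests on the uniform estimate $\sup_{n\neq k}|q^{k}-q^{n}|^{-1}<\infty$, which is the essential analytic input and the reason $\sigma_{r}(T,S)$ collapses to the single point $\{1\}$.
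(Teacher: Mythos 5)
Your proof follows essentially the same route as the paper's: restrict candidates via Corollary \ref{corRSQ}, then carry out a coordinate computation showing $\mathcal{R}_{1}(T,S)$ has cohomology $0,\mathbb{C},\mathbb{C}$ while $\mathcal{R}_{q^{k}}(T,S)$ is exact for $k\geq 1$, with the uniform lower bound $\inf_{n\neq k}|q^{k}-q^{n}|>0$ as the analytic engine. One inaccuracy worth fixing: the inequality $|1-q^{n}|^{-1}\leq|1-q|^{-1}$ you invoke is false for general complex $q$ with $|q|<1$ (take $q=-1/2$, $n=2$: $|1-q^{2}|=3/4<3/2=|1-q|$); the correct uniform estimate is $|1-q^{n}|\geq 1-|q|^{n}\geq 1-|q|$, which still gives the needed $\ell_{p}$-membership. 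For $H^{2}(1)$ the paper argues a bit more cleanly, observing $\operatorname{im}(1-S)\subseteq\operatorname{im}(T)$ so that $\operatorname{im}(r_{1}^{1})=\operatorname{im}(T)$ has codimension one, with no need to invert $(1-S)$ on its range.
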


\begin{proof}
Based on Theorem \ref{lemSpLR}, we consider the following parametrized Banach
space complex
\[%
\begin{tabular}
[c]{lllllll}%
$\mathcal{R}_{\mu}\left(  T,S\right)  :0\rightarrow$ & $\ell_{p}$ &
$\overset{r_{\mu}^{0}=\left[
\begin{tabular}
[c]{l}%
$T$\\
$\mu-qS$%
\end{tabular}
\ \right]  }{\longrightarrow}$ & $\ell_{p}\oplus\ell_{p}$ & $\overset{r_{\mu
}^{1}=\left[
\begin{tabular}
[c]{ll}%
$\mu-S$ & $-T$%
\end{tabular}
\ \right]  }{\longrightarrow}$ & $\ell_{p}$ & $\rightarrow0$%
\end{tabular}
\ \ \ \
\]
whose cohomology groups are denoted by $H^{i}\left(  \mu\right)  $, $i=0,1,2$.
Since $\ker\left(  T\right)  =\left\{  0\right\}  $, it follows that
$H^{0}\left(  \mu\right)  =\left\{  0\right\}  $ for all $\mu\in\mathbb{C}$.
If $\mu=1$ then $\left(  \mu-S\right)  \left(  e_{0}\right)  =0$ and
$\operatorname{im}\left(  \mu-S\right)  \subseteq\operatorname{im}\left(
T\right)  $, which means that $\operatorname{im}\left(  r_{\mu}^{1}\right)
=\operatorname{im}\left(  \mu-S\right)  +\operatorname{im}\left(  T\right)
=\operatorname{im}\left(  T\right)  \neq\ell_{p}$, that is, $H^{2}\left(
1\right)  =\ell_{p}/\operatorname{im}\left(  T\right)  =\mathbb{C}$.

Further, note that $\left(  e_{0},0\right)  \in\ker\left(  r_{1}^{1}\right)
$, and for every $\left(  \zeta,\eta\right)  \in\ker\left(  r_{1}^{1}\right)
$ with $\zeta=\sum_{n}\alpha_{n}e_{n}$, $\eta=\sum_{n}\beta_{n}e_{n}\in
\ell_{p}$, we have $\left(  \zeta,\eta\right)  =\alpha_{0}\left(
e_{0},0\right)  +\left(  \zeta^{\prime},\eta\right)  $ with $\zeta^{\prime
}=\sum_{n\geq1}\alpha_{n}e_{n}$.. Since $e_{0}\notin\operatorname{im}\left(
T\right)  $, it follows that $\left(  e_{0},0\right)  \notin\operatorname{im}%
\left(  r_{1}^{0}\right)  $. Thus we can assume that $\alpha_{0}=0$. In this
case, $\left(  1-q^{n}\right)  \alpha_{n}=\beta_{n-1}$ for all $n\geq1$. Put
$\gamma_{n}=\alpha_{n+1}$, $n\geq0$, and $\theta=\sum_{n}\gamma_{n}e_{n}%
\in\ell_{p}$. Then
\begin{align*}
r_{1}^{0}\left(  \theta\right)   &  =\left(  T\theta,\left(  1-qS\right)
\theta\right)  =\left(  \sum_{n\geq1}\gamma_{n-1}e_{n},\sum_{n}\left(
1-q^{n+1}\right)  \gamma_{n}e_{n}\right) \\
&  =\left(  \sum_{n\geq1}\alpha_{n}e_{n},\sum_{n}\left(  1-q^{n+1}\right)
\alpha_{n+1}e_{n}\right)  =\left(  \zeta,\eta\right)  .
\end{align*}
Thus $H^{1}\left(  1\right)  =\mathbb{C}$ too. Notice also that $q^{-1}%
\notin\sigma\left(  S\right)  $, which means that $1-qS$ is invertible and
$\operatorname{im}\left(  r_{1}^{0}\right)  $ is closed.

Now based on Corollary \ref{corRSQ}, let us assume that $\mu=q^{k}$ with
$k>0$. Then $\left(  \mu-S\right)  \left(  e_{0}\right)  =\left(
\mu-1\right)  e_{0}\neq0$ and $\operatorname{im}\left(  r_{\mu}^{1}\right)
=\operatorname{im}\left(  \mu-S\right)  +\operatorname{im}\left(  T\right)
=\ell_{p}$, that is, $H^{2}\left(  \mu\right)  =\left\{  0\right\}  $. Take
$\zeta=\sum_{n}\alpha_{n}e_{n}$, $\eta=\sum_{n}\beta_{n}e_{n}\in\ell_{p}$ with
$\left(  \zeta,\eta\right)  \in\ker\left(  r_{\mu}^{1}\right)  $. Then
$\left(  \mu-S\right)  \zeta=T\eta$, which means that $\left(  \mu-1\right)
\alpha_{0}=0$ and $\left(  \mu-q^{n}\right)  \alpha_{n}=\beta_{n-1}$ for all
$n\geq1$. It follows that $\alpha_{0}=0$, $\alpha_{n}=\beta_{n-1}/\left(
\mu-q^{n}\right)  $ for all $n\neq k$, $\beta_{k-1}=0$, and $\alpha_{k}$
remains free. Put $\theta=\sum_{n}\gamma_{n}e_{n}$ with $\gamma_{n}=\beta
_{n}/\left(  \mu-q^{n+1}\right)  $, $n\neq k-1$ and $\gamma_{k-1}=\alpha_{k}$.
Since $\left\vert \gamma_{n}\right\vert ^{p}=\left\vert \beta_{n}\right\vert
^{p}\left\vert \mu-q^{n+1}\right\vert ^{-p}\leq2^{p}\left\vert \beta
_{n}\right\vert ^{p}\left\vert q\right\vert ^{-kp}$ for all large $n$, it
follows that $\left\Vert \theta\right\Vert _{p}\leq c\left\Vert \eta
\right\Vert _{p}<\infty$ for some positive $c$, which depends on $\mu$. Thus
$\theta\in\ell_{p}$ and
\[
r_{\mu}^{0}\left(  \theta\right)  =\left(  T\theta,\left(  \mu-qS\right)
\theta\right)  =\left(  \sum_{n\geq1}\gamma_{n-1}e_{n},\sum_{n\geq0}\left(
\mu-q^{n+1}\right)  \gamma_{n}e_{n}\right)  =\left(  \sum_{n}\alpha_{n}%
e_{n},\sum_{n}\beta_{n}e_{n}\right)  =\left(  \zeta,\eta\right)  ,
\]
that is, $\left(  \zeta,\eta\right)  \in\operatorname{im}\left(  r_{\mu}%
^{0}\right)  $. Hence $H^{1}\left(  \mu\right)  =\left\{  0\right\}  $, and
the equality $\sigma_{r}\left(  T,S\right)  =\left\{  1\right\}  $ follows.

Thus $\sigma_{y}^{\pi,0}\left(  T,S\right)  =\varnothing$, and $\sigma
_{y}^{\pi,m}\left(  T,S\right)  =\sigma_{y}^{\delta,n}\left(  T,S\right)
=\sigma_{y}\left(  T,S\right)  =\left\{  \left(  0,1\right)  \right\}  $ for
all $m\geq1$, and for all $n$.
\end{proof}

\subsection{The first cohomology groups over the interior of the unit disk}

The following assertion states that for the interior of the disk
$\mathbb{D}_{1}$ the spectral points from $\sigma_{l}\left(  T,S\right)  $
could only occur in the first cohomology groups too.

\begin{lemma}
\label{lemH0H2}If $\left\vert \lambda\right\vert <1$ then $H^{0}\left(
\lambda\right)  =H^{2}\left(  \lambda\right)  =\left\{  0\right\}  $, where
$H^{i}\left(  \lambda\right)  $ are the cohomology groups of the complex
$\mathcal{L}_{\lambda}\left(  T,S\right)  $.
\end{lemma}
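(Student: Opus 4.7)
The plan is to treat the two vanishings separately, since the complex $\mathcal{L}_\lambda(T,S)$ only has three nontrivial positions and $H^0$ and $H^2$ are governed by very different phenomena.

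The vanishing of $H^0(\lambda)$ is essentially formal and does not require $|\lambda|<1$ at all. Since $H^0(\lambda)=\ker(l_\lambda^0)=\ker(S)\cap\ker(\lambda-q^{-1}T)\subseteq\ker(S)$, and since the diagonal operator $S$ acts on the basis by $S(e_n)=q^n e_n$ with every eigenvalue $q^n\neq 0$, one has $\ker(S)=\{0\}$. Hence $H^0(\lambda)=\{0\}$ for all $\lambda\in\mathbb{C}$.

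The substantive work goes into $H^2(\lambda)=\ell_p/\bigl((\lambda-T)\ell_p+S\ell_p\bigr)$. My plan is to prove that $(\lambda-T)\ell_p$ is a closed subspace of $\ell_p$ of codimension exactly one, and then to observe that the missing one-dimensional direction already lies in $S\ell_p$. For the first point, the hypothesis $|\lambda|<1$ and the fact that $T$ is an isometry of $\ell_p$ yield the lower bound $\|(\lambda-T)\xi\|_p\geq(1-|\lambda|)\|\xi\|_p$, so $\lambda-T$ is bounded below (in particular, injective with closed range). To pin down the codimension, I would identify $e_0$ as a witness that escapes $\operatorname{im}(\lambda-T)$: the formal recursion obtained from $(\lambda-T)\xi=e_0$ forces the coefficients $a_n=-\lambda^{-(n+1)}$, which are unbounded for $|\lambda|<1$, so no solution $\xi\in\ell_p$ exists. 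The codimension-one assertion itself is then confirmed by duality, since $\ker(T^*-\bar\lambda)\subseteq\ell_{p'}$ is spanned by the geometric vector $\sum_{n\geq 0}\bar\lambda^n e_n$, which lies in $\ell_{p'}$ precisely because $|\lambda|<1$. Thus $\ell_p=(\lambda-T)\ell_p\oplus\mathbb{C}e_0$.

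The argument then closes immediately: $S(e_0)=e_0$, so $e_0\in S\ell_p$, and therefore
\[
(\lambda-T)\ell_p+S\ell_p\supseteq(\lambda-T)\ell_p+\mathbb{C}e_0=\ell_p,
\]
giving $H^2(\lambda)=\{0\}$. The only real obstacle is the codimension-one property of $\operatorname{im}(\lambda-T)$; once one isolates $e_0$ as a representative of the cokernel, the lucky coincidence that $S$ fixes $e_0$ finishes the proof with no further work. In particular, for $|\lambda|<1$ the whole spectral contribution to $\sigma_l(T,S)$ must come from the middle cohomology $H^1(\lambda)$, in line with the picture in Proposition \ref{propEx1}.
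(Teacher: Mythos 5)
Your proof is correct, and it takes a genuinely different route from the paper on the substantive part, the vanishing of $H^2(\lambda)$. (The $H^0$ argument via $\ker S = \{0\}$ is identical in both.) The paper proceeds by contradiction: it cites Conway for the fact that $\operatorname{im}(\lambda-T)$ is closed of codimension one with annihilator spanned by $\zeta_\lambda = \sum_n (\lambda^*)^n e_n \in \ell_{p'}$; it then observes that $H^2(\lambda)\neq\{0\}$ would force the inclusion $\operatorname{im}(S)\subseteq\operatorname{im}(\lambda-T)$ (since the latter has codimension one), invokes Phillips' factorization theorem to write $S=(\lambda-T)A$ for some bounded $A$, passes to adjoints to get $S^*\zeta_\lambda=0$, and derives a contradiction from $\ker(S^*)=\operatorname{im}(S)^\perp=\{0\}$. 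You instead construct an explicit representative of the cokernel of $\lambda-T$: the elementary lower bound $\|(\lambda-T)\xi\|_p\geq(1-|\lambda|)\|\xi\|_p$ gives closed range and injectivity, the recursion shows $e_0\notin\operatorname{im}(\lambda-T)$, and the duality computation pins down the codimension as exactly one, yielding $\ell_p=(\lambda-T)\ell_p\oplus\mathbb{C}e_0$. You then exploit the lucky but decisive coincidence that $S(e_0)=e_0$, so the one missing direction already lies in $S\ell_p$, and surjectivity of $l_\lambda^1$ follows by direct inspection. The upshot is that your argument avoids Phillips' theorem entirely and gives a constructive, self-contained proof, while the paper's version is shorter given the functional-analytic machinery it is willing to cite. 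One cosmetic note: your recursion sentence implicitly assumes $\lambda\neq 0$; for $\lambda=0$ one should simply say $e_0\notin\operatorname{im}(T)$ directly, but the duality computation already covers this case, so the gap is purely expository.
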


\begin{proof}
As we have stated above $H^{0}\left(  \lambda\right)  =\left\{  0\right\}  $
out of the fact $\ker\left(  S\right)  =\left\{  0\right\}  $. Since
$\left\vert \lambda\right\vert <1$, it follows that $\ker\left(
\lambda-T\right)  =\left\{  0\right\}  $, $\operatorname{im}\left(
\lambda-T\right)  $ is closed, and $\dim\left(  \ell_{p}/\operatorname{im}%
\left(  \lambda-T\right)  \right)  =1$ (see \cite[VII, 6.5]{Con}). Actually,
the absolute polar $\operatorname{im}\left(  \lambda-T\right)  ^{\perp}%
=\ker\left(  \lambda^{\ast}-T^{\ast}\right)  =\left\langle \zeta_{\lambda
}\right\rangle $ is the subspace generated by the vector $\zeta_{\lambda}%
=\sum_{n}\left(  \lambda^{\ast}\right)  ^{n}e_{n}\in\ell_{q}$ with
$p^{-1}+q^{-1}=1$. If $H^{2}\left(  \lambda\right)  \neq\left\{  0\right\}  $
then $\operatorname{im}\left(  l_{\lambda}^{1}\right)  =\operatorname{im}%
\left(  \lambda-T\right)  +\operatorname{im}\left(  S\right)  \neq\ell_{p}$,
which means that $\operatorname{im}\left(  S\right)  \subseteq
\operatorname{im}\left(  \lambda-T\right)  $. Using Phillips Theorem
\cite[7.4.13]{Kut}, we conclude that $S=\left(  \lambda-T\right)  A$ for some
operator $A\in\mathcal{B}\left(  \ell_{p}\right)  $. Then $S^{\ast}=A^{\ast
}\left(  \lambda^{\ast}-T^{\ast}\right)  $ in $\mathcal{B}\left(  \ell
_{q}\right)  $, and $S^{\ast}\zeta_{\lambda}=0$, that is, $\zeta_{\lambda}%
\in\ker\left(  S^{\ast}\right)  $. But $\ker\left(  S^{\ast}\right)
=\operatorname{im}\left(  S\right)  ^{\perp}=\left\{  0\right\}  $, a contradiction.
\end{proof}

\begin{remark}
If $\left\vert \lambda\right\vert <\left\vert q\right\vert $ then
$\eta_{\lambda}=\sum_{n}\left(  \lambda/q\right)  ^{n}e_{n}\in\ell_{p}$ and
$\left\langle S\eta_{\lambda},\zeta_{\lambda}\right\rangle =\sum_{n\geq
0}\left\vert \lambda\right\vert ^{n}\geq1$, where $\left\langle \cdot
,\cdot\right\rangle $ indicates to the canonical duality $\ell_{p}\times
\ell_{q}\rightarrow\mathbb{C}$. Hence $\operatorname{im}\left(  S\right)
\nsubseteqq\operatorname{im}\left(  \lambda-T\right)  $ or $H^{2}\left(
\lambda\right)  =\left\{  0\right\}  $.
\end{remark}

The presence of a nonzero $H^{1}\left(  \lambda\right)  $ for $0<\left\vert
\lambda\right\vert <1$ (see Proposition \ref{propEx1}) remains unclear. To be
precise, let us consider the case of $p=2$. For every $\eta=\sum_{n}\beta
_{n}e_{n}\in\ell_{2\text{ }}$ we denote by $\eta_{n}$ its $n$th Fourier
tail-sum $\eta_{n}=\sum_{k=n+1}^{\infty}\beta_{k}e_{n}$. As above in Lemma
\ref{lemH0H2}, we use the notation $\zeta_{\lambda q}=\sum_{n}\left(  q^{\ast
}\lambda^{\ast}\right)  ^{n}e_{n}$ to be a vector from $\ell_{2}$, and
consider its orthogonal complement $\left\{  \zeta_{\lambda q}\right\}
^{\perp}\subseteq\ell_{2\text{ }}$. Put
\[
M_{\lambda}=\left\{  \eta\in\left\{  \zeta_{\lambda q}\right\}  ^{\perp
}:\epsilon_{\lambda}\left(  \eta\right)  =\sum_{n\geq0}\left\vert \left(
\eta_{n},T^{n+1}\zeta_{\lambda q}\right)  \right\vert ^{2}<\infty\right\}
\]
to be a subset of $\left\{  \zeta_{\lambda q}\right\}  ^{\perp}$. If
$\eta,\theta\in M_{\lambda}$ then
\begin{align*}
\epsilon_{\lambda}\left(  \eta+\theta\right)   &  \leq\epsilon_{\lambda
}\left(  \eta\right)  +\epsilon_{\lambda}\left(  \theta\right)  +2\sum
_{n\geq0}\left\vert \left(  \eta_{n},T^{n+1}\zeta_{\lambda q}\right)
\right\vert \left\vert \left(  \theta_{n},T^{n+1}\zeta_{\lambda q}\right)
\right\vert \\
&  \leq\epsilon_{\lambda}\left(  \eta\right)  +\epsilon_{\lambda}\left(
\theta\right)  +2\epsilon_{\lambda}\left(  \eta\right)  ^{1/2}\epsilon
_{\lambda}\left(  \theta\right)  ^{1/2}=\left(  \epsilon_{\lambda}\left(
\eta\right)  ^{1/2}+\epsilon_{\lambda}\left(  \theta\right)  ^{1/2}\right)
^{2}<\infty.
\end{align*}
Thereby $M_{\lambda}$ is a subspace of $\left\{  \zeta_{\lambda q}\right\}
^{\perp}$. For every $z\in\mathbb{C}$ with $0\leq\left\vert z\right\vert <1$,
the vector $\eta=\sum_{n}\beta_{n}e_{n}$ with $\beta_{0}=-zq\lambda/\left(
1-zq\lambda\right)  $ and $\beta_{n}=z^{n}$, $n\geq1$ belongs to $M_{\lambda}%
$. Indeed, $\left(  \eta,\zeta_{\lambda q}\right)  =\beta_{0}+\sum_{n\geq
1}\left(  zq\lambda\right)  ^{n}=0$ and
\[
\epsilon_{\lambda}\left(  \eta\right)  =\sum_{n\geq0}\left\vert \sum
_{k=n+1}^{\infty}z^{k}\left(  q\lambda\right)  ^{k-n-1}\right\vert ^{2}%
=\sum_{n\geq0}\left\vert z\right\vert ^{2n+2}\left\vert \frac{1}{1-zq\lambda
}\right\vert ^{2}=\left\vert \frac{z}{1-zq\lambda}\right\vert ^{2}\frac
{1}{1-\left\vert z\right\vert ^{2}}<\infty,
\]
that is, $\eta\in M_{\lambda}$. Actually, every vector $\eta\in\left\{
\zeta_{\lambda q}\right\}  ^{\perp}$ with $\sum_{n\geq0}\left\Vert \eta
_{n}\right\Vert ^{2}<\infty$ (or $\sum_{n\geq0}\sum_{k>n}\left\vert \beta
_{k}\right\vert ^{2}<\infty$) belongs to the subspace $M_{\lambda}$.

\begin{lemma}
\label{lemH1}If $0<\left\vert \lambda\right\vert <1$ then there is a canonical
topological isomorphic identification
\[
H^{1}\left(  \lambda\right)  =\left\{  \zeta_{\lambda q}\right\}  ^{\perp
}/M_{\lambda},
\]
where $H^{1}\left(  \lambda\right)  $ is the first cohomology group of the
complex $\mathcal{L}_{\lambda}\left(  T,S\right)  $.
\end{lemma}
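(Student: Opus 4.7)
The plan is to exploit the natural projection $p\colon\ker(l_\lambda^1)\to\ell_2$, $p(\zeta,\eta)=\eta$, which converts the identification of $H^1(\lambda)$ into a concrete analytic question about the shift $T$. First I would show that $p$ is a topological isomorphism onto $\{\zeta_{\lambda q}\}^\perp$. Since $|\lambda|<1$, the shift $\lambda-T$ has trivial kernel, so $\eta=0$ forces $(\lambda-T)\zeta=S\eta=0$ and hence $\zeta=0$; thus $p$ is injective. By the proof of Lemma \ref{lemH0H2}, $\operatorname{im}(\lambda-T)=\{\zeta_\lambda\}^\perp$, and a direct calculation with $S^{\ast}e_n=\overline{q}^n e_n$ gives $S^{\ast}\zeta_\lambda=\zeta_{\lambda q}$. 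Consequently $S\eta\in\operatorname{im}(\lambda-T)$ iff $\eta\in\{\zeta_{\lambda q}\}^\perp$, so $p(\ker(l_\lambda^1))=\{\zeta_{\lambda q}\}^\perp$, and the open mapping theorem upgrades $p$ to a topological isomorphism between these Banach spaces.

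Next, since $p\circ l_\lambda^0 = \lambda - q^{-1}T$, we have $p(\operatorname{im}(l_\lambda^0))=\operatorname{im}(\lambda-q^{-1}T)$. A brief computation using $T^{\ast}e_n=e_{n-1}$ for $n\geq 1$ and $T^{\ast}e_0=0$ shows $T^{\ast}\zeta_{\lambda q}=q^{\ast}\lambda^{\ast}\zeta_{\lambda q}$, whence $(\lambda^{\ast}-q^{-\ast}T^{\ast})\zeta_{\lambda q}=0$ and $\operatorname{im}(\lambda-q^{-1}T)\subseteq\{\zeta_{\lambda q}\}^\perp$ automatically. The lemma therefore reduces to the equality $\operatorname{im}(\lambda-q^{-1}T)=M_\lambda$ as subspaces of $\{\zeta_{\lambda q}\}^\perp$.

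This equality is the heart and main obstacle of the argument. Given $\eta=\sum_n\beta_n e_n\in\{\zeta_{\lambda q}\}^\perp$, the equation $(\lambda-q^{-1}T)\theta=\eta$ with $\theta=\sum_n\alpha_n e_n$ is a first-order recursion $\lambda\alpha_n-q^{-1}\alpha_{n-1}=\beta_n$ ($\alpha_{-1}=0$) with unique formal solution
\[
\alpha_n=(q\lambda)^{-n}\lambda^{-1}\sum_{k=0}^n(q\lambda)^k\beta_k.
\]
Because $|q\lambda|<1$, the blow-up factor $(q\lambda)^{-n}$ makes direct size estimates useless; the orthogonality $\langle\eta,\zeta_{\lambda q}\rangle = \sum_k(q\lambda)^k\beta_k=0$ is precisely what rescues the argument, allowing one to flip the partial sum into a tail $\sum_{k=0}^n(q\lambda)^k\beta_k=-\sum_{k>n}(q\lambda)^k\beta_k$. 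After this substitution the factor $(q\lambda)^{-n}$ cancels and $\alpha_n$ becomes a weighted tail inner product; comparing with the definition of $\epsilon_\lambda$ should produce the identity $\alpha_n=-q\langle\eta_n,T^{n+1}\zeta_{\lambda q}\rangle$ and therefore $\|\theta\|_2^2=|q|^2\epsilon_\lambda(\eta)$. Hence $\theta\in\ell_2$ precisely when $\eta\in M_\lambda$, yielding $\operatorname{im}(\lambda-q^{-1}T)=M_\lambda$.

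Combining the three steps, $p$ descends to a continuous linear bijection of quotients $H^1(\lambda)=\ker(l_\lambda^1)/\operatorname{im}(l_\lambda^0)\to\{\zeta_{\lambda q}\}^\perp/M_\lambda$, which is the desired canonical topological identification with respect to the quotient topologies transported from the Banach isomorphism $p$.
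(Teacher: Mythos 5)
Your proposal is correct and takes essentially the same approach as the paper: both identify $\ker(l_\lambda^1)$ with $\{\zeta_{\lambda q}\}^\perp$ via the projection onto the second component and then characterize the image of $l_\lambda^0$ by solving the same first-order recursion, arriving at the identical formula $\gamma_n = -q(\eta_n, T^{n+1}\zeta_{\lambda q})$ and $\|\theta\|_2^2 = |q|^2\epsilon_\lambda(\eta)$. The only cosmetic difference is that you characterize $p(\operatorname{im}(l_\lambda^0))$ directly as $\operatorname{im}(\lambda - q^{-1}T)$ via the identity $p\circ l_\lambda^0 = \lambda - q^{-1}T$ (using $T^*\zeta_{\lambda q} = \overline{q\lambda}\,\zeta_{\lambda q}$ to see this image lands in $\{\zeta_{\lambda q}\}^\perp$), whereas the paper goes through the first component $\zeta = S\theta$ before arriving at the same $\theta$; your remark that the orthogonality $(\eta,\zeta_{\lambda q})=0$ is what converts the divergent partial sums into summable tails is a helpful observation not stated explicitly in the paper.
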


\begin{proof}
Take $\left(  \zeta,\eta\right)  \in\ker\left(  l_{\lambda}^{1}\right)  $,
that is, $\left(  \lambda-T\right)  \zeta=S\eta$. But $\operatorname{im}%
\left(  \lambda-T\right)  ^{\perp}=\left\langle \zeta_{\lambda}\right\rangle $
(see to the proof of Lemma \ref{lemH0H2}), which means that $\left(
S\eta,\zeta_{\lambda}\right)  =0$ or $\left(  \eta,S^{\ast}\zeta_{\lambda
}\right)  =0$. Notice that $S^{\ast}\zeta_{\lambda}=\zeta_{\lambda q}$. Since
$\ker\left(  \lambda-T\right)  =\left\{  0\right\}  $, the mapping $\left(
\zeta,\eta\right)  \mapsto\eta$ implements a topological isomorphism
$\ker\left(  l_{\lambda}^{1}\right)  \rightarrow\left\{  \zeta_{\lambda
q}\right\}  ^{\perp}$ onto. Once $\eta=\sum\beta_{n}e_{n}\in\left\{
\zeta_{\lambda q}\right\}  ^{\perp}$ one can easily restore $\zeta=\sum
\alpha_{n}e_{n}$ with
\[
\alpha_{n}=\frac{1}{\lambda^{n+1}}\left(  \eta-\eta_{n},\zeta_{\lambda
q}\right)  .
\]
Indeed, $\lambda\alpha_{0}=\beta_{0}$ and
\begin{equation}
\lambda\alpha_{n}-\alpha_{n-1}=\frac{1}{\lambda^{n}}\left(  \eta-\eta
_{n},\zeta_{\lambda q}\right)  -\frac{1}{\lambda^{n}}\left(  \eta-\beta
_{n}e_{n}-\eta_{n},\zeta_{\lambda q}\right)  =\frac{\beta_{n}}{\lambda^{n}%
}\left(  e_{n},\zeta_{\lambda q}\right)  =\frac{\beta_{n}}{\lambda^{n}}\left(
\lambda q\right)  ^{n}=q^{n}\beta_{n} \label{ab}%
\end{equation}
for all $n\geq1$, which means that $\left(  \lambda-T\right)  \zeta=S\eta$.
Notice that $\sum\left\vert \alpha_{n}\right\vert ^{2}<\infty$ holds automatically.

It remains to prove that for $\left(  \zeta,\eta\right)  \in\ker\left(
l_{\lambda}^{1}\right)  $ the inclusion $\left(  \zeta,\eta\right)
\in\operatorname{im}\left(  l_{\lambda}^{0}\right)  $ holds iff $\eta\in
M_{\lambda}$. But $\left(  \zeta,\eta\right)  \in\operatorname{im}\left(
l_{\lambda}^{0}\right)  $ iff $\zeta=S\theta$ for some $\theta=\sum\gamma
_{n}e_{n}$, that is, $q^{n}\gamma_{n}=\alpha_{n}$, $n\geq0$. Indeed, in this
case, using (\ref{ab}), we derive that
\begin{align*}
\left(  \lambda-q^{-1}T\right)  \theta &  =\lambda\gamma_{0}e_{0}+\sum
_{n\geq1}\left(  \lambda\gamma_{n}-q^{-1}\gamma_{n-1}\right)  e_{n}%
=\lambda\alpha_{0}e_{0}+\sum_{n\geq1}\left(  \lambda q^{-n}\alpha_{n}%
-q^{-n}\alpha_{n-1}\right)  e_{n}\\
&  =\lambda\alpha_{0}e_{0}+\sum_{n\geq1}q^{-n}\left(  \lambda\alpha_{n}%
-\alpha_{n-1}\right)  e_{n}=\sum\beta_{n}e_{n}=\eta.
\end{align*}
Further, $\zeta=S\theta$ is equivalent to
\begin{align*}
\gamma_{n}  &  =\alpha_{n}q^{-n}=\frac{1}{\lambda}\frac{1}{\left(
q\lambda\right)  ^{n}}\left(  \eta-\eta_{n},\zeta_{\lambda q}\right)
=-\frac{1}{\lambda}\frac{1}{\left(  q\lambda\right)  ^{n}}\left(  \eta
_{n},\zeta_{\lambda q}\right)  =-\frac{1}{\lambda}\frac{1}{\left(
q\lambda\right)  ^{n}}\sum_{k>n}\beta_{k}\left(  q\lambda\right)  ^{k}\\
&  =-q\sum_{k>n}\beta_{k}\left(  q\lambda\right)  ^{k-n-1}=-q\left(  \eta
_{n},T^{n+1}\zeta_{\lambda q}\right)
\end{align*}
with $\left\Vert \theta\right\Vert _{2}^{2}=\sum_{n}\left\vert \gamma
_{n}\right\vert ^{2}=\left\vert q\right\vert ^{2}\sum_{n}\left\vert \left(
\eta_{n},T^{n+1}\zeta_{\lambda q}\right)  \right\vert ^{2}=\left\vert
q\right\vert ^{2}\epsilon_{\lambda}\left(  \eta\right)  <\infty$.
\end{proof}

\subsection{The description of spectra}

Now we can summarize all results obtained in the present section.

\begin{theorem}
\label{thTSE}Let $\left(  T,S\right)  $ be a pair given by the unilateral
shift operator and the diagonal $q$-operator on the Banach space $\ell_{p}$,
$1\leq p<\infty$. Then%
\[
\mathbb{D}_{\left\vert q\right\vert ^{-1}}\backslash\mathbb{D}_{1}^{\circ
}\subseteq\sigma_{l}\left(  T,S\right)  \subseteq\mathbb{D}_{\left\vert
q\right\vert ^{-1}},\quad\sigma_{e}\left(  T,S\right)  =\left(  \partial
\mathbb{D}_{1}\cup\partial\mathbb{D}_{\left\vert q\right\vert ^{-1}}\right)
\times\left\{  0\right\}  ,
\]
where $\mathbb{D}_{1}^{\circ}$ is the interior of the disk $\mathbb{D}_{1}$.
In this case, for the left spectra we have
\begin{align*}
\sigma_{l}^{\pi,1}\left(  T,S\right)  \cap\left(  \mathbb{D}_{\left\vert
q\right\vert ^{-1}}\backslash\mathbb{D}_{1}^{\circ}\right)   &  =\Sigma
_{l}^{1}\left(  T,S\right)  \cap\left(  \mathbb{D}_{\left\vert q\right\vert
^{-1}}\backslash\mathbb{D}_{1}^{\circ}\right)  =\sigma_{l}^{\delta,1}\left(
T,S\right)  \cap\left(  \mathbb{D}_{\left\vert q\right\vert ^{-1}}%
\backslash\mathbb{D}_{1}^{\circ}\right)  =\mathbb{D}_{\left\vert q\right\vert
^{-1}}\backslash\mathbb{D}_{1}^{\circ},\\
\sigma_{l}^{\delta,2}\left(  T,S\right)  \cap\left(  \mathbb{D}_{\left\vert
q\right\vert ^{-1}}\backslash\mathbb{D}_{1}\right)   &  =\varnothing,
\end{align*}
and
\[
\sigma_{l}^{\pi,1}\left(  T,S\right)  \cap\mathbb{D}_{1}^{\circ}=\sigma
_{l}^{\delta,1}\left(  T,S\right)  \cap\mathbb{D}_{1}^{\circ}\text{,\quad
}\sigma_{l}^{\delta,2}\left(  T,S\right)  \cap\mathbb{D}_{1}^{\circ
}=\varnothing.
\]
The following equalities
\[
\sigma_{r}^{\pi,0}\left(  T,S\right)  =\varnothing,\quad\sigma_{r}^{\pi
,1}\left(  T,S\right)  =\sigma_{r}^{\pi,2}\left(  T,S\right)  =\sigma
_{r}\left(  T,S\right)  =\sigma_{r}^{\delta,i}\left(  T,S\right)  =\left\{
1\right\}
\]
for the right spectra hold for all $i$, $0\leq i\leq2$.
\end{theorem}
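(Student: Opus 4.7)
The theorem synthesizes the preceding pointwise computations into regional statements, so I would organize the proof into three blocks: the inclusion for $\sigma_{l}(T,S)$, the essential-spectrum formula, and the Slodkowski identifications. The inclusion $\mathbb{D}_{|q|^{-1}}\setminus\mathbb{D}_{1}^{\circ}\subseteq\sigma_{l}(T,S)\subseteq\mathbb{D}_{|q|^{-1}}$ follows from Proposition \ref{propEx1} combined with closedness of $\sigma_{l}(T,S)\subseteq\mathbb{C}$, which is a consequence of Theorem \ref{lemSpLR} and the parametrized-complex stability result \cite[Theorem 2.1]{Tay}; taking the topological closure of the relatively open annulus $\mathbb{D}_{|q|^{-1}}\setminus\mathbb{D}_{1}$ then picks up the inner boundary circle $\partial\mathbb{D}_{1}$.

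For the essential-spectrum formula I would split across $\mathbb{C}_{x}$ and $\mathbb{C}_{y}$. On $\mathbb{C}_{y}$, Proposition \ref{propEx2} already supplies finite-dimensional cohomology at $(0,1)$ with closed images and exactness elsewhere, so $\sigma_{e}(T,S)\cap\mathbb{C}_{y}=\varnothing$. On $\mathbb{C}_{x}$, for $\lambda\notin\mathbb{D}_{|q|^{-1}}$ Proposition \ref{propEx1} gives exactness and hence Fredholmness. For $|\lambda|\in\{1,|q|^{-1}\}$, exactly one of $\lambda-T$ and $\lambda-q^{-1}T$ is a unit-modulus shift with non-closed range, and I would argue that compactness of $S$ cannot restore closedness of the corresponding image of $l_{\lambda}^{0}$ or $l_{\lambda}^{1}$, so the complex fails Fredholmness. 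For $\lambda$ in the open regions $\mathbb{D}_{1}^{\circ}$ and $\mathbb{D}_{|q|^{-1}}^{\circ}\setminus\overline{\mathbb{D}_{1}}$, the decoupled complex $\mathcal{L}_{\lambda}(T,0)$ is Fredholm, its cohomology being built from the finite-dimensional kernels and cokernels of the Fredholm operators $\lambda-T$ and $\lambda-q^{-1}T$; since $S$ is compact (as $|q|<1$) and the homotopy $t\mapsto\mathcal{L}_{\lambda}(T,tS)$ consists of genuine complexes for all $t$ thanks to the pseudo-commutation $(\lambda-T)S=S(\lambda-q^{-1}T)$, stability of Fredholm complexes under compact perturbation yields Fredholmness of $\mathcal{L}_{\lambda}(T,S)$ with finite-dimensional cohomology and closed images.

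The Slodkowski identifications then reduce to pinning down $H^{i}(\lambda)$ and the closedness status of $\operatorname{im}(l_{\lambda}^{i})$ on each region. One has $H^{0}(\lambda)=0$ everywhere from $\ker(S)=0$. On the closed annulus, Proposition \ref{propEx1} supplies $H^{1}(\lambda)\neq0$ throughout, so each of $\sigma_{l}^{\pi,1}\cap(\mathbb{D}_{|q|^{-1}}\setminus\mathbb{D}_{1}^{\circ})$, $\Sigma_{l}^{1}\cap(\mathbb{D}_{|q|^{-1}}\setminus\mathbb{D}_{1}^{\circ})$, and $\sigma_{l}^{\delta,1}\cap(\mathbb{D}_{|q|^{-1}}\setminus\mathbb{D}_{1}^{\circ})$ collapses to the closed annulus. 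The vanishing $\sigma_{l}^{\delta,2}\cap(\mathbb{D}_{|q|^{-1}}\setminus\mathbb{D}_{1})=\varnothing$ follows because $\lambda\notin\sigma(T)$ on the open half of the annulus forces $\operatorname{im}(l_{\lambda}^{1})=X$; the same conclusion on $\mathbb{D}_{1}^{\circ}$ is precisely the Phillips-theorem argument of Lemma \ref{lemH0H2}, which also gives closedness of $\operatorname{im}(l_{\lambda}^{1})$ and thus $\sigma_{l}^{\pi,1}\cap\mathbb{D}_{1}^{\circ}=\sigma_{l}^{\delta,1}\cap\mathbb{D}_{1}^{\circ}$. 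The right-spectrum equalities are read directly from Proposition \ref{propEx2}.

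The main technical obstacle is justifying the Fredholm-complex compact-perturbation step uniformly in $p\in[1,\infty)$. Lemma \ref{lemH1} provides an explicit model of $H^{1}(\lambda)$ only in the Hilbert case $p=2$, and even there finite-dimensionality is not transparent from the concrete description. The homotopy argument through $\mathcal{L}_{\lambda}(T,tS)$ circumvents this by reducing all finite-dimensionality and closed-image statements to the corresponding properties of the decoupled complex $\mathcal{L}_{\lambda}(T,0)$, which are immediate from standard Fredholm theory of the shift operator.
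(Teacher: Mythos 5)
Your outline matches the paper step for step on the $\sigma_{l}$ inclusion, the Slodkowski identifications, and the right-spectrum equalities; the one place where you take a genuinely different route is the essential spectrum. The paper applies the Buoni--Harte--Wickstead functor $F(X)=\ell_{\infty}(X)/k(X)$: since $S$ is compact, $F(S)=0$, so $F(\mathcal{L}_{\lambda}(T,S))$ decouples into $F(\lambda-T)$ and $F(\lambda-q^{-1}T)$ on the nose, and Fainstein's criterion reduces $\sigma_{e}$ on $\mathbb{C}_{x}$ directly to $\sigma(F(T))\cup q^{-1}\sigma(F(T))=\partial\mathbb{D}_{1}\cup\partial\mathbb{D}_{|q|^{-1}}$, with the $\mathbb{C}_{y}$-component killed by the Fredholmness of $\mathcal{R}_{1}(T,S)$ from Proposition~\ref{propEx2}. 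You instead homotope $\mathcal{L}_{\lambda}(T,tS)$, $t\in[0,1]$, through genuine complexes (legitimate since $(\lambda-T)(tS)=(tS)(\lambda-q^{-1}T)$ for every $t$) and invoke stability of Fredholm complexes under compact perturbations. These are the same underlying idea — mod out or deform away the compact part $S$ to decouple the complex — but yours is stated perturbatively and the paper's is stated functorially; the BHW route buys you the Fainstein formula in one line, while yours requires an explicit diagnosis of the decoupled complex $\mathcal{L}_{\lambda}(T,0)$, which you carry out correctly.

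One minor blemish: for $|\lambda|\in\{1,|q|^{-1}\}$ you resort to an ad hoc remark that ``compactness of $S$ cannot restore closedness.'' This is the right intuition but the wrong phrasing — what you need is that $\mathcal{L}_{\lambda}(T,S)$ is not Fredholm as a complex, not a claim about a single range failing to be closed. However, the compact-perturbation stability theorem you already invoke is a two-sided equivalence (a complex is Fredholm iff its compact perturbation is), so it dispatches the boundary circles with no extra argument: $\mathcal{L}_{\lambda}(T,0)$ fails to be Fredholm exactly on $\partial\mathbb{D}_{1}\cup\partial\mathbb{D}_{|q|^{-1}}$, hence so does $\mathcal{L}_{\lambda}(T,S)$. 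Replacing the hand-wave with this one observation makes your argument complete and uniform across the whole $\lambda$-plane.
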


\begin{proof}
By Proposition \ref{propEx1}, we have $\mathbb{D}_{\left\vert q\right\vert
^{-1}}\backslash\mathbb{D}_{1}\subseteq\sigma_{l}\left(  T,S\right)
\subseteq\mathbb{D}_{\left\vert q\right\vert ^{-1}}$. But $\sigma_{l}\left(
T,S\right)  $ is a compact set (Proposition \ref{propNVs}), therefore the
inclusion $\mathbb{D}_{\left\vert q\right\vert ^{-1}}\backslash\mathbb{D}%
_{1}^{\circ}\subseteq\sigma_{l}\left(  T,S\right)  $ holds too (see also
\cite[Proposition 3.1]{DosAJM}).

The equalities for the left spectra $\sigma_{l}^{\pi,k}\left(  T,S\right)  $,
$\sigma_{l}^{\delta,k}\left(  T,S\right)  $ within $\mathbb{D}_{\left\vert
q\right\vert ^{-1}}\backslash\mathbb{D}_{1}$ can easily be driven from
Proposition \ref{propEx1}, whereas Lemma \ref{lemH0H2} implies the equalities
for the left spectra within $\mathbb{D}_{1}^{\circ}$. The equalities for the
right spectra $\sigma_{r}^{\pi,k}\left(  T,S\right)  $, $\sigma_{r}^{\delta
,k}\left(  T,S\right)  $ follow from Proposition \ref{propEx2}.

To prove the equality for the essential joint spectrum we use the functor $F$
of Buoni-Harte-Wickstead \cite{BHW} defined on the category of Banach spaces.
Recall that $F\left(  X\right)  =\ell_{\infty}\left(  X\right)  /k\left(
X\right)  $, where $k\left(  X\right)  $ is the subspace of all precompact
sequences on $X$. Since $F$ is annihilating over all compact operators, we
obtain that $F\left(  \mathcal{L}_{\lambda}\left(  T,S\right)  \right)  $ is
reduced to the following Banach space complex
\[%
\begin{tabular}
[c]{lllllll}%
$0\rightarrow$ & $F\left(  \ell_{p}\right)  $ & $\overset{F\left(  l_{\lambda
}^{0}\right)  =\left[
\begin{tabular}
[c]{l}%
$0$\\
$\lambda-q^{-1}F\left(  T\right)  $%
\end{tabular}
\ \ \right]  }{\longrightarrow}$ & $F\left(  \ell_{p}\right)  \oplus F\left(
\ell_{p}\right)  $ & $\overset{F\left(  l_{\lambda}^{1}\right)  =\left[
\begin{tabular}
[c]{ll}%
$\lambda-F\left(  T\right)  $ & $0$%
\end{tabular}
\ \ \right]  }{\longrightarrow}$ & $F\left(  \ell_{p}\right)  $ &
$\rightarrow0.$%
\end{tabular}
\
\]
It follows that $\left(  \lambda,0\right)  \in\sigma_{e}\left(  T,S\right)  $
iff $\lambda\in\sigma\left(  F\left(  T\right)  \right)  \cup q^{-1}%
\sigma\left(  F\left(  T\right)  \right)  $ (see \cite{Fain80}). But
$\sigma\left(  F\left(  T\right)  \right)  =\sigma_{e}\left(  T\right)
=\partial\mathbb{D}_{1}$ (see \cite[VII, 6.5]{Con}), therefore $\sigma
_{e}\left(  T,S\right)  \cap\mathbb{C}_{x}=\left(  \partial\mathbb{D}_{1}%
\cup\partial\mathbb{D}_{\left\vert q\right\vert ^{-1}}\right)  \times\left\{
0\right\}  $.

Finally, by Proposition \ref{propEx2}, $\sigma_{y}\left(  T,S\right)
=\left\{  \left(  0,1\right)  \right\}  $ and $\mathcal{R}_{1}\left(
T,S\right)  $ is a Fredholm complex. It follows that $\left(  0,1\right)
\notin\sigma_{e}\left(  T,S\right)  \cap\mathbb{C}_{y}$ or $\sigma_{e}\left(
T,S\right)  \cap\mathbb{C}_{y}=\varnothing$. Optionally, one can apply the
functor $F$ to the complex $\mathcal{R}_{1}\left(  T,S\right)  $, which turns
out to be an exact complex. Whence $\sigma_{e}\left(  T,S\right)  =\left(
\partial\mathbb{D}_{1}\cup\partial\mathbb{D}_{\left\vert q\right\vert ^{-1}%
}\right)  \times\left\{  0\right\}  $.
\end{proof}

\begin{remark}
It remains unclear whether $\sigma_{l}\left(  T,S\right)  \cap\mathbb{D}%
_{1}^{\circ}$ is empty or not. Notice that Lemma \ref{lemH1} provides more
complicated description of the cohomology group $H^{1}\left(  \lambda\right)
$ for $\lambda\in\mathbb{D}_{1}^{\circ}$.
\end{remark}

As a concluding remark, let us point out that the $q$-projection property from
the main Theorem \ref{lemSpLR} turns out to be the best possible projection
property for the operator $q$-planes that confirms Theorem \ref{thTSE}.

\end{document}